\documentclass{article}
\usepackage{graphicx, amssymb, enumerate, pstricks, pst-node}
\usepackage{pst-all}
\usepackage{url}
\usepackage{lineno}
\usepackage{latexsym}
\usepackage{amsmath,amsthm}
\usepackage{epsfig}
\usepackage{epic}
\usepackage{amssymb}
\usepackage{enumitem}
\sloppy
\setlength{\unitlength}{1mm}
\usepackage{calc}
\usepackage{authblk}
\newcommand{\ba}{\backslash}
\newcommand{\reals}{\mathbb{R}}
\DeclareMathOperator{\cl}{cl}
\DeclareMathOperator{\tw}{tw}
\DeclareMathOperator{\nw}{nw}
\DeclareMathOperator{\rd}{rd}


\newtheorem{theorem}{Theorem}[section]
\newtheorem{corollary}[theorem]{Corollary}

\newtheorem{lemma}[theorem]{Lemma}

\theoremstyle{definition}

\newtheorem*{example}{Example}


\title{On zeros of the characteristic polynomial of matroids of bounded tree-width}

\author[3]{Carolyn Chun\thanks{chun@usna.edu}}
\author[1]{Rhiannon Hall\thanks{rhiannon.hall@brunel.ac.uk}}
\author[2]{Criel Merino\thanks{merino@matem.unam.mx. Investigaci\'on realizada gracias al Programa UNAM-DGAPA-PAPIIT IN102315.}}
\author[4]{Steven Noble\thanks{s.noble@bbk.ac.uk}}

\affil[1]{Department of Mathematical Sciences, Brunel University London, Uxbridge,  UB8 3PH, United Kingdom}
\affil[2]{Instituto de Matem\'aticas\\ Universidad
    Nacional Aut\'onoma de M\'exico \\ M\'exico City, M\'exico}
\affil[3]{Mathematics Department, United States Naval Academy, Annapolis, MD, United States of America}
\affil[4]{Department of Economics, Mathematics and Statistics, Birkbeck, University of London, London WC1E 7HX, United Kingdom}

\begin{document}

\maketitle

{\small
\copyright 2016
This manuscript version is made available under the CC-BY-NC-ND 4.0 license \texttt{http://creativecommons.org/licenses/by-nc-nd/4.0/}

\texttt{http://dx.doi.org/10.1016/j.ejc.2016.08.011}}

\begin{abstract}
We develop some basic tools to work with representable matroids of bounded tree-width and use them to prove that, for any prime power $q$ and constant $k$, the characteristic polynomial of any loopless, $GF(q)$-representable matroid with tree-width $k$ has no real zero greater than $q^{k-1}$.
\end{abstract}


\section{Introduction}
\label{sec:intro}

For a graph $G$, the chromatic polynomial $\chi_G(\lambda)$ is an invariant which counts the number of proper colourings of $G$ when evaluated at a non-negative integer $\lambda$. However, the chromatic polynomial has an additional interpretation as the zero-temperature antiferromagnetic Potts model of statistical mechanics. 
This has motivated research into the zeros of the chromatic polynomial by theoretical physicists as well as
mathematicians.
Traditionally, the focus from a graph theory perspective has been the positive integer roots, which correspond to the graph not being properly colourable with $\lambda$ colours.
A growing body of work has begun to emerge in recent years more concerned with the behaviour of real or complex roots of the chromatic polynomial.
Sokal~\cite{Sok04} proved that the set of roots of chromatic polynomials is dense in the complex plane.
In contrast, many other results show that certain regions are free from zeros.
For planar graphs, the Birkhoff--Lewis theorem states that the interval $[5,\infty)$  is free from zeros.
For more results along these lines, we direct the reader to the work of
  Borgs~\cite{Bor06},  Jackson~\cite{Jac93},  Sokal \cite{Sok01b}, Thomassen~\cite{Tho97} and  Woodall~\cite{Woo97}.
Perhaps one of the most compelling open questions concerning real zeros is to determine tight bounds on the largest real zero of the chromatic polynomial.
One such bound is given in~\cite{Sok01b} and depends on the maximum vertex degree.
For recent surveys see~\cite{Royle} and~\cite{Dong}.

In matroids, the corresponding invariant is the characteristic polynomial.
The \emph{characteristic polynomial} of a loopless matroid $M$, with ground set $E$ and rank function $r$, is defined by
 \begin{equation*}
     \chi_{M}(\lambda) = \sum_{F\in L} \mu_{M}(\emptyset,F)\lambda^{r(E)-r(F)},
 \end{equation*}
where $L$ denotes the lattice of flats of $M$ and $\mu_{M}$ the M\"obius function of $L$.
When $M$ has a loop, $\chi_{M}(\lambda)$ is defined to be zero.
Observe that for a loopless matroid $M$, $\chi_{M}(\lambda)$ is monic of degree $r(E)$ and that $M$ and its simplification have the same characteristic polynomial.

The projective geometry of rank $r$ over $GF(q)$ is denoted by $PG(r-1,q)$, and $U_{r,n}$, where $n\geq r$, denotes the uniform matroid with rank $r$ containing $n$ elements.
In the uniform matroid, every set of $r$ or fewer elements is independent.
The characteristic polynomials of $PG(r-1,q)$ and $U_{r,n}$ play important roles in this paper, and these are easily computed.
For a prime power $q$, the projective geometry $PG(r-1,q)$ has lattice of flats  isomorphic to the lattice of subspaces of the $r$-dimensional vector space over $GF(q)$.  Hence it has characteristic polynomial
\begin{equation}
\label{eq:PGchar}
\chi_{PG(r-1,q)}(\lambda)=(\lambda-1)(\lambda-q)(\lambda-q^2)\cdots(\lambda-q^{r-1}).
\end{equation}
The largest root of the characteristic polynomial for a projective geometry is therefore $q^{r-1}$.
The characteristic polynomial of the uniform matroid, $U_{r,n}$, is
\[
\chi_{U_{r,n}}(\lambda)=\sum_{k=0}^{r-1} (-1)^{k}\binom{n}{k} (\lambda^{r-k}-1).
\]

For more background on matroid theory, we suggest that the reader consults~\cite{Oxley}.  For the theory of the M\"obius function and the characteristic polynomial, we recommend~\cite{BrOx,Zaslavsky}.

Perhaps the most compelling open question concerning real zeros in this context is deciding whether
there is an upper bound for the real roots of the characteristic polynomial of any matroid belonging to a specified minor-closed class.
Welsh conjectured that no cographic matroid has a characteristic polynomial with a root in $(4,\infty)$.
This was recently disproved by Haggard et al. in~\cite{HPR10}, and, in~\cite{JS}, Jacobsen and  Salas showed that there are cographic matroids whose characteristic polynomials have roots exceeding five.
Consequently, determining whether an upper bound exists for the roots of the characteristic polynomials of cographic matroids remains open.
In~\cite{Royle}, Royle conjectured that for any minor-closed class of $GF(q)$-representable matroids, not including all graphs, there is a bound on the largest real root of the characteristic polynomial.
Given the situation with cographic matroids, this is clearly a difficult conjecture to resolve in the affirmative. In contrast, the situation with graphic matroids has been resolved.
Thomassen~\cite{Tho97} noted that by combining a result that he and Woodall~\cite{Woo97} had obtained independently with a result of Mader~\cite{Mad67}, one obtains the following.

\begin{theorem}
Let $\mathcal F$ be a proper minor-closed family of graphs. Then there exists $c\in \reals$ such that the chromatic polynomial of any loopless graph $G$ in $\mathcal F$ has no root larger than $c$.
\end{theorem}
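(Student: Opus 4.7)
The plan is to combine two inputs. First, by Mader's theorem, every proper minor-closed family of graphs has bounded average degree: there exists a constant $d_0$ such that every graph with average degree greater than $d_0$ contains every fixed excluded minor $H$. Since $\mathcal F$ is a proper minor-closed family, choose such an $H\notin\mathcal F$ and let $d_0=c(H)$. Because every subgraph of a graph in $\mathcal F$ is itself in $\mathcal F$, this average-degree bound applies to every subgraph of every $G\in\mathcal F$. In particular each subgraph has a vertex of degree at most $d_0$, so every $G\in\mathcal F$ is $d_0$-degenerate.

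Second, I would invoke the independently obtained result of Thomassen and Woodall to the effect that the chromatic polynomial of a $d$-degenerate graph has no real zero exceeding some constant $c(d)$ depending only on $d$. The theorem then follows at once: taking $c=c(d_0)$, every loopless $G\in\mathcal F$ has all real zeros of $\chi_G$ at most $c$.

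The substantive content, and the expected obstacle, is the Thomassen--Woodall step rather than Mader's theorem. To outline how I would attack it: argue by induction on $|V(G)|$. Pick a vertex $v$ with $\deg(v)\le d$. The graph $G-v$ is again $d$-degenerate, so by induction $\chi_{G-v}(\lambda)>0$ for all $\lambda>c(d)$. Now one wants to conclude the same for $\chi_G$. For integer $\lambda$, one can use the naive inequality $\chi_G(\lambda)\ge (\lambda-d)\chi_{G-v}(\lambda)$ (each proper colouring of $G-v$ extends to at least $\lambda-d$ colourings of $G$), but this is not enough for real $\lambda$ and does not by itself yield a bound growing polynomially in $d$. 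The difficulty is therefore to establish a sign/positivity statement for all real $\lambda$ past the threshold; the standard way to handle this is to carry a stronger inductive hypothesis that tracks the sign of $\chi$ together with a lower bound for the multiplicative ratio $\chi_G(\lambda)/\chi_{G-v}(\lambda)$, and to exploit a decomposition of $\chi_G(\lambda)$ based on the partition of $N(v)$ into colour classes, summed against M\"obius values on the partition lattice.

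Once such a positivity threshold $c(d)$ is in hand, the composition with Mader's theorem is immediate and produces the constant $c=c(d_0)$ required by the statement.
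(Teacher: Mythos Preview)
Your high-level structure---Mader's theorem gives a degeneracy bound $d_0$ for any proper minor-closed class, and then the Thomassen--Woodall result bounds the real chromatic roots of $d_0$-degenerate graphs---is exactly what the paper says: it does not prove this theorem but attributes it to Thomassen, who observed it follows from precisely this combination.

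Where your sketch goes astray is in the anticipated difficulty of the Thomassen--Woodall step. The inequality $\chi_G(\lambda)\ge(\lambda-d)\chi_{G-v}(\lambda)$ for integer $\lambda$ is not the right starting point, and no strengthened inductive hypothesis or partition-lattice M\"obius argument is needed. The actual content is an \emph{exact polynomial identity}: if $v$ has neighbours $x_1,\dots,x_m$ (so the edges at $v$ form a cocircuit of size $m$ in the cycle matroid), then
\[
\chi_G(\lambda)=(\lambda-m)\,\chi_{G-v}(\lambda)+\sum_{1\le i<j\le m}\chi_{G_{i,j}}(\lambda),
\]
where each $G_{i,j}$ is obtained from $G$ by deleting some of the edges at $v$ and contracting two of them. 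This is the graph case of Oxley's lemma (stated as Lemma~\ref{oxco} in the paper) and was rediscovered independently by Thomassen and Woodall. Since it is an identity of polynomials, it holds for all real $\lambda$; each $G_{i,j}$ is a minor of $G$ with strictly fewer vertices, and $G-v$ is loopless, so straightforward induction on $|V(G)|$ within the minor-closed class gives $\chi_G(\lambda)>0$ for all real $\lambda>d_0$, with the sharp constant $c(d_0)=d_0$.
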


For certain minor-closed families of graphs, one can find the best possible constant $c$. One such example is the class of graphs with bounded tree-width, a concept originally introduced by Robertson and Seymour~\cite{RS2}.
A \emph{tree-decomposition} of a graph $G$ comprises a tree $T$ and a collection $\{X_t\}_{t\in V(T)}$ of subsets of $V(G)$ satisfying the following properties.
\begin{enumerate}
\item For every edge $uv$ of $G$, there is a vertex $t$ of $T$ such that $\{u,v\} \subseteq X_t$.
\item If $p$ and $r$ are distinct vertices in $T$, the vertex $v$ is in $X_p \cap X_r$ and $q$ lies on the path from $p$ to $r$ in $T$, then $v\in X_q$.
\end{enumerate}
The width of a tree-decomposition is $\max_{t \in V(T)} |X_t|-1$ and the tree-width of a graph is the minimum width of all of its tree-decompositions. As its name suggests, graph tree-width measures how closely a graph resembles a tree.
Matroid tree-width, which we will define later, measures how closely a matroid resembles a tree.
If a graph can be obtained by gluing small graphs together in a tree-like structure, then it has small tree-width.
Likewise, if a matroid can be obtained by gluing small matroids together along a tree-like pattern, then it has small matroid tree-width.
Thomassen~\cite{Tho97} proved the following.
\begin{theorem}
\label{graphy}
For positive integer $k$, let $G$ be a graph with tree-width at most $k$.
Then the chromatic polynomial, $\chi _G(\lambda)$, is identically zero or else $\chi _G(\lambda)>0$ for all $\lambda >k$.
\end{theorem}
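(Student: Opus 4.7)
The plan is to proceed by induction on $|V(G)|$, using a tree-decomposition $(T,\{X_t\}_{t\in V(T)})$ of $G$ of width at most $k$. The base case $|V(G)|\le 1$ is immediate. For the inductive step, first prune the tree-decomposition so that some leaf bag $X_t$ contains a vertex $v$ appearing in no other bag of $T$; this is always possible when $V(G)\ne\emptyset$, since if every vertex of a leaf bag also appears in its parent then the leaf can be deleted and the process repeated. By the second axiom of a tree-decomposition, every neighbor of $v$ in $G$ lies in $X_t\setminus\{v\}$, so $\deg_G(v)\le |X_t|-1\le k$.

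Within the inductive step I run a secondary induction on the number of non-edges inside $N_G(v)$. In the base case $N_G(v)$ is a clique of $G$, so $v$ is simplicial and the standard chromatic identity gives
\[
\chi_G(\lambda) \;=\; (\lambda-\deg_G(v))\,\chi_{G-v}(\lambda).
\]
The graph $G-v$ has tree-width at most $k$ and fewer vertices, so by the outer induction $\chi_{G-v}(\lambda)>0$ for $\lambda>k$; combined with $\lambda-\deg_G(v)>0$ this yields $\chi_G(\lambda)>0$.

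For the inner inductive step, pick $x,y\in N_G(v)$ with $xy\notin E(G)$ and apply the non-edge deletion-contraction identity
\[
\chi_G(\lambda) \;=\; \chi_{G+xy}(\lambda) + \chi_{G/xy}(\lambda),
\]
where $G/xy$ denotes the graph obtained by identifying $x$ and $y$. Since $x,y\in X_t$, the graph $G+xy$ admits the same tree-decomposition as $G$, and the inner hypothesis (one fewer non-edge inside $N_G(v)$) yields $\chi_{G+xy}(\lambda)>0$ for $\lambda>k$. For $G/xy$, replacing $x$ and $y$ by a single vertex $z$ in every bag that contained either of them produces a valid tree-decomposition of width at most $k$; the key point is that the set of bags containing $z$ is the union of the subtrees $T_x$ and $T_y$, and this union is a subtree because $x,y$ share the bag $X_t$. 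Since $xy\notin E(G)$, no loop is created, so $G/xy$ is loopless, and the outer induction supplies $\chi_{G/xy}(\lambda)>0$ for $\lambda>k$. The sum of two strictly positive quantities is strictly positive, so $\chi_G(\lambda)>0$.

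The main technical obstacle is the bookkeeping for the nested induction: one must check both that the inner induction terminates (any pair $x,y$ inside $N_G(v)$ strictly decreases the count of non-edges in $N_G(v)$ upon adding $xy$, and, upon contracting $xy$, triggers the outer induction instead), and that the tree-decomposition structure is preserved at every step. Both facts depend crucially on the choice of $x,y$ from $N_G(v)\subseteq X_t\setminus\{v\}$, which forces $T_x\cap T_y\ni t$ and thereby keeps all bag sizes at most $k+1$ after contraction.
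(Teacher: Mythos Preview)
Your proof is correct. It follows the same overall architecture as the sketch of Thomassen's argument given in the paper---induction on $|V(G)|$, with an inner loop that uses the addition--contraction identity $\chi_G=\chi_{G+xy}+\chi_{G/xy}$ to fill in missing edges while staying within tree-width $k$---but you aim the edge-additions at a different target. The paper completes a separator $X_s\cap X_t$ to a clique and then factors $\chi_{G'}$ via the clique-cut formula $\chi_{G'}=\chi_{G_1'}\chi_{G_2'}/\chi_{K_d}$ into two strictly smaller pieces; you instead complete $N_G(v)$ to a clique for a private leaf-vertex $v$ and then peel off the resulting simplicial vertex via $\chi_{G''}(\lambda)=(\lambda-\deg v)\,\chi_{G''-v}(\lambda)$. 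Your version is really the degenerate case of the clique-cut step (the cut separates a single vertex), so it avoids invoking the product formula and is a touch more elementary. It is also close in spirit to the ``alternative way'' the paper mentions just after the theorem---find a vertex of degree at most $k$ and exploit the small cocircuit---except that you route the bookkeeping through simplicial vertices rather than Oxley's cocircuit identity (Lemma~\ref{oxco}).
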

Thomassen's proof proceeded essentially as follows, using induction on the number of vertices of $G$.
Let $G$ have tree-width $k$.
Take a tree-decomposition of width $k$, with notation as above. Choose $s$ and $t$ to be neighbouring vertices in $T$. Then $X_s\cap X_t$ is a vertex-cut of $G$. One may add edges to $G$ with both end-vertices in $X_s\cap X_t$ until $X_s\cap X_t$ forms a clique without altering the tree-width. Call this new graph $G'$. The chromatic polynomial of $G$ may be written in terms of the chromatic polynomial of graphs with fewer vertices than $G$ having tree-width at most $k$ and the chromatic polynomial of $G'$ in such a way that one may apply induction provided the result can be established for $G'$. But since $G'$ has a clique whose vertices comprise a vertex-cut, the chromatic polynomial of $G'$ may also be expressed in terms of the chromatic polynomials of graphs with fewer vertices and having tree-width at most $k$.

In this paper, we generalize Thomassen's  useful technique to matroids.
The $GF(q)$-representable matroid analogue of a clique is a projective geometry over $GF(q)$.
A given simple graph $G$ sits inside a clique on $V(G)$ in the same way that a simple $GF(q)$-representable matroid $M$ with rank $r$ sits inside $PG(r-1,q)$.
In the above technique, edges are added to an ``area" of $G$ to form a clique restriction, so that the altered graph has a clique vertex-cut.
This can be viewed as adding edges from the clique on $V(G)$ to the graph $G$ to obtain a clique, across which our graph may be broken.
In this paper, we show how to add elements from $PG(r-1,q)$ to a certain ``area" of $M$ in order to get a $GF(q)$-representable matroid with a certain projective geometry restriction, across which our matroid may be broken.
The map that we use to break apart a matroid is a \emph{tree-decomposition}, which was established by Hlin\u{e}n\'{y} and Whittle in~\cite{HW}.
They developed a matroid analogue of graph tree-width, which we define formally in Section~\ref{tree}.

In order to generalize Thomassen's technique, we first develop some tools for $GF(q)$-representable matroids of bounded \emph{matroid tree-width}.
We then apply these tools to extend his argument to matroid tree-width, which we shall refer to simply as \emph{tree-width} when the context is clear.
In this way, we demonstrate the utility of these tools and simultaneously make progress towards Royle's conjecture.

An alternative way to prove Theorem~\ref{graphy} is to combine the observation that every graph with tree-width at most $k$ has a vertex of degree at most $k$ with Lemma~\ref{oxco} below, established by Oxley for matroids and rediscovered for the special case of graphs by Thomassen~\cite{Tho97} and Woodall~\cite{Woo97}. We show that this proof technique may also be extended to { representable matroids.
In fact, this technique extends to a slightly more general class of matroids, namely matroids that exclude long line minors, which are considered in Theorem~\ref{nolines}.}

It was shown in~\cite{HW} that the tree-width of a matroid is at least equal to the tree-width of each of its minors, thus the class of matroids with tree-width at most $k$ is closed under taking minors.
The following result for such a minor-closed class is the main result of this paper.

\begin{theorem}
\label{maincourse}
For prime power $q$ and positive integer $k$, let $M$ be a $GF(q)$-representable matroid with tree-width at most $k$.
Then $\chi _M(\lambda)$ is identically zero or else $\chi _M(\lambda)>0$ for all $\lambda > q^{k-1}$.
\end{theorem}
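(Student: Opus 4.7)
The plan is to generalize Thomassen's chromatic-cut argument (outlined earlier in the introduction) to the matroid setting, proceeding by induction on $|E(M)|$. The base case covers matroids of rank at most $k-1$, where $\chi_M$ is the characteristic polynomial of a restriction of $PG(k-2,q)$ and so has all real zeros bounded above by $q^{k-2}<q^{k-1}$, and matroids containing a loop, for which $\chi_M\equiv 0$ by definition.

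For the inductive step, take a matroid tree-decomposition of $M$ of width at most $k$ and pick an edge of the underlying tree whose removal produces a balanced separation $(A,B)$ of $E(M)$ of connectivity at most $k$. Setting $r=r(M)$, view $M$ as a restriction of $PG(r-1,q)$ and form an augmented matroid $M^+$ by adjoining all points of the ambient geometry lying in the flat $F=\cl_M(A)\cap\cl_M(B)$, whose rank $j$ is at most $k$. Then $M^+$ has a projective-geometry restriction $N\cong PG(j-1,q)$ on $F$ which is a modular flat, so $M^+$ decomposes as the generalized parallel connection $P_N(M_1^+,M_2^+)$; the ``basic tools'' from the paper's earlier sections should guarantee that $M^+$ (and hence $M_1^+$ and $M_2^+$) still have tree-width at most $k$.

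The engine of the induction is Brylawski's modular-flat factorization,
\begin{equation*}
\chi_{M^+}(\lambda)\cdot\chi_N(\lambda)=\chi_{M_1^+}(\lambda)\cdot\chi_{M_2^+}(\lambda),
\end{equation*}
combined with $\chi_N(\lambda)=\prod_{i=0}^{j-1}(\lambda-q^i)$, which is strictly positive whenever $\lambda>q^{k-1}\geq q^{j-1}$. Each $M_i^+$ has strictly fewer elements than $M^+$ (both sides of $(A,B)$ contribute points outside $F$ when the separation is chosen correctly), so the inductive hypothesis yields $\chi_{M_i^+}(\lambda)>0$ (or $\equiv 0$) for $\lambda>q^{k-1}$, and hence the same conclusion for $\chi_{M^+}$. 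To pass back to $M$, I would peel the added elements of $M^+\setminus M$ off one at a time using the deletion-contraction identity $\chi_{M^+\setminus e}(\lambda)=\chi_{M^+}(\lambda)+\chi_{M^+/e}(\lambda)$; each contraction $M^+/e$ has fewer elements and tree-width at most $k$, so non-negativity propagates down to $\chi_M$ and the strict inequality at some stage gives $\chi_M(\lambda)>0$ unless a loop appears in which case $\chi_M\equiv 0$.

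The principal obstacle is the tree-width preservation claim: filling in the ambient projective geometry on the flat $F$ must not push matroid tree-width above $k$. This is the matroid analogue of the elementary graph observation that adding edges inside a single bag of a tree-decomposition does not change tree-width, but matroid tree-decompositions operate on flats rather than on vertex subsets, and projective closure can add very many new points; establishing this preservation, together with verifying that the two pieces $M_1^+$ and $M_2^+$ remain $GF(q)$-representable of tree-width at most $k$, is exactly the kind of structural work the paper's introduction advertises and is the crux of the argument.
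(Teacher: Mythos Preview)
Your overall architecture matches the paper's first proof closely: augment $M$ along a neck to $M^S$, peel off the added points via deletion--contraction, and split $M^S$ as a generalized parallel connection across a projective-geometry flat using Brylawski's identity. The genuine gap is the induction variable. You induct on $|E(M)|$, but the moment you pass from $M$ to $M^+$ you have \emph{added} elements, and the auxiliary matroids you then need are only compared in size to $M^+$, not to $M$. Concretely, the contraction $M^{\{s_1,\ldots,s_i\}}/s_i$ arising in the peeling step has $|E(M)|+i-1$ elements, which for $i\ge 2$ exceeds $|E(M)|$ (and for $i=1$ merely equals it), so the hypothesis does not apply. The same problem hits the two pieces $M_i^+$: you only argue they are smaller than $M^+$, and in fact a small side of the separation together with a large projective neck can easily give $|E(M_1^+)|>|E(M)|$. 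Your proposed base case is a symptom of the same defect: the assertion that every loopless $GF(q)$-representable matroid of rank at most $k-1$ has $\chi>0$ beyond $q^{k-2}$ is not a triviality about restrictions of $PG(k-2,q)$ --- it is essentially the theorem itself at tree-width $k-1$.

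The paper repairs all of this by inducting on $r(M)$. Every contraction $M^{\{s_1,\ldots,s_i\}}/s_i$ has rank $r(M)-1$; for the parallel-connection step one takes a \emph{good} tree-decomposition and a leaf edge, so that Corollary~\ref{hall} forces each side of the displayed partition to have rank strictly below $r(M)$, and since adjoining neck points does not raise rank, both $M_1$ and $M_2$ have rank below $r(M)$. The base case is then simply $r(M)=1$. Incidentally, the tree-width preservation you flag as the ``principal obstacle'' is the easy part (Lemma~\ref{starfish1}): the added points lie in the closure of both sides of the chosen edge, so every rank defect --- and hence every node width --- is unchanged.
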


{ In the case that $r(M)\leq k$, Theorem~\ref{maincourse} follows easily from known results, in particular Equation~\eqref{eq:PGchar}}. However, this case is not especially interesting, because the rank of a matroid is always bounded below by its tree-width. Our result gives a new bound for representable matroids with high rank and low tree-width.

 The requirement of representability is essential to the result.
For instance, the characteristic polynomial of the $n$-point line, $U_{2,n}$, has a root at $n-1$.
As $U_{2,n}$ has tree-width at most two, the $n$-point lines and their minors form a minor-closed class of matroids with bounded tree-width that do not have an upper bound for the roots of their characteristic polynomials.
Furthermore, the projective geometry $PG(k-1,q)$ has tree-width $k$ and its characteristic polynomial has a root at $q^{k-1}$, hence the bound given is the best possible.
Lemma~\ref{twprop} contains the basic results on tree-width necessary to justify these observations.

{ Given that the line $U_{2,n}$ is the simplest counter-example that we know of, it is natural to consider whether $GF(q)$-representability is necessary, or if excluding a long line minor from a matroid with bounded tree-width is sufficient to bound the roots of the characteristic polynomial, as suggested by Geelen and Nelson~\cite{gnpc}.
We show that this condition is indeed sufficient in the following theorem.
Note that, if $q$ is a prime power, then $U_{2,2+q}$ is an excluded minor for matroids representable over $GF(q)$.
Thus the following theorem applies to a more general class of matroids than Theorem~\ref{maincourse} applies to, although the bound is different.

\begin{theorem}
\label{nolines}
For an integer $q$ at least two, let $M$ be a matroid with tree-width at most $k$ and no minor isomorphic to $U_{2,2+q}$.
Then $\chi _M(\lambda)$ is identically zero or else $\chi _M(\lambda)>0$ for all $\lambda > \frac{q ^k-1}{q -1}$.
\end{theorem}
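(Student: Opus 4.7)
The plan is to proceed by induction on $|E(M)|$, following the alternative proof strategy for Theorem~\ref{graphy} sketched in the introduction: locate an element (or small set) of $M$ whose ``local structure'' has bounded size, and apply Lemma~\ref{oxco} to reduce to a smaller minor. Since the class of matroids of tree-width at most $k$ with no $U_{2,2+q}$ minor is closed under minors (by~\cite{HW} for tree-width and because excluded-minor classes are automatically minor-closed), any deletion or contraction of $M$ lies in the class and is available to the induction hypothesis. Loops and the empty matroid are handled trivially, so we may assume $M$ is loopless and nonempty.

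The heart of the argument is the matroid analogue of the graph-theoretic fact ``a graph of tree-width at most $k$ has a vertex of degree at most $k$''. Using the tree-decomposition framework defined in Section~\ref{tree}, we expect to extract a pendant piece of $M$: a subset $F \subseteq E(M)$ that spans a flat of rank at most $k$ and is separated from its complement in a tree-like fashion. The hypothesis that $M$ has no $U_{2,2+q}$ minor passes to restrictions, so the simplification of $M|F$ satisfies the same hypothesis; Kung's theorem then yields $|F| \le (q^k-1)/(q-1)$. This pendant, low-rank flat plays the role of the low-degree vertex in Thomassen's graph argument. Feeding it into Lemma~\ref{oxco} bounds the largest real root of $\chi_M(\lambda)$ by the size of the local structure, i.e.\ by $(q^k-1)/(q-1)$, conditional on the corresponding bound holding for the relevant smaller minor (either $M\backslash e$ or $M/e$ for some $e \in F$, depending on the precise form of Lemma~\ref{oxco}). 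The induction hypothesis supplies this, completing the argument.

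The principal obstacle is the structural step. While the existence of a low-degree vertex in a bounded tree-width graph is immediate, its matroid counterpart --- identifying a pendant subset of rank at most $k$ in a matroid of tree-width at most $k$ --- requires careful use of the Hlin\u{e}n\'{y}--Whittle tree-decomposition introduced in Section~\ref{tree}. A secondary subtlety is that Lemma~\ref{oxco} must be applied with the Kung bound $(q^k-1)/(q-1)$ in place of the simple degree/rank bound $k$, so one must verify that the inequality furnished by Lemma~\ref{oxco} really gives the stated threshold when the local structure is a rank-$k$ matroid of maximum possible size under the no-$U_{2,2+q}$-minor hypothesis.
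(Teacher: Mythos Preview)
Your strategy---leaf bag of rank at most $k$ from a good tree-decomposition, Kung's theorem to bound its size, then Lemma~\ref{oxco} plus induction over the minor-closed class---is exactly the paper's proof. The one point to sharpen is that Lemma~\ref{oxco} takes a \emph{cocircuit} as input, not a flat or a pendant set: the paper notes that in a good tree-decomposition the leaf bag $E_v$ is not contained in $\cl(E(M)-E_v)$ (Lemma~\ref{red}), so $E_v$ contains a cocircuit $C^*$ with $|C^*|\le |E_v|\le (q^k-1)/(q-1)$ once $M$ is assumed simple; the induction (packaged in the paper as Lemma~\ref{le:bdedcoc}, on rank rather than $|E(M)|$) then proceeds through $M\backslash C^*$ and the minors $M\backslash X_{i,j}/x_i,x_j$ appearing in Lemma~\ref{oxco}, not through single-element deletions or contractions as your last paragraph suggests.
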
}

{
Combining Theorem~\ref{nolines} with the observation that the characteristic polynomial of $U_{2,n}$ has a root at $n-1$ yields the following dichotomy.

\begin{corollary}\label{dichotomy}
Let $\mathcal M$ be a minor-closed class of matroids having tree-width at most $k$. Then either $\mathcal M$ contains all simple matroids of rank two, or there exists $\lambda_{\mathcal M}$ such that
for any loopless matroid $M$ in $\mathcal M$, $\chi _M(\lambda)>0$ for all $\lambda > \lambda_{\mathcal M}$.
\end{corollary}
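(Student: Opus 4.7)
The plan is straightforward: assume the first alternative fails and derive the second from Theorem~\ref{nolines}. Since the simple matroids of rank two are exactly the uniform matroids $U_{2,n}$ with $n\geq 2$, the failure of the first alternative means that some $U_{2,n}\notin\mathcal{M}$ with $n\geq 2$. Because $U_{2,n}$ is a minor of $U_{2,m}$ whenever $m\geq n$ (just delete $m-n$ elements), minor-closedness of $\mathcal{M}$ propagates the exclusion to every $U_{2,m}$ with $m\geq n$.

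Now I would set $q=\max\{2,\,n-2\}$, which guarantees both $q\geq 2$ and $U_{2,q+2}\notin\mathcal{M}$. Every $M\in\mathcal{M}$ therefore has tree-width at most $k$ and excludes $U_{2,2+q}$ as a minor, so Theorem~\ref{nolines} applies and gives that $\chi_M(\lambda)$ is either identically zero or strictly positive on $\left(\frac{q^k-1}{q-1},\infty\right)$. For loopless $M$, $\chi_M$ is not identically zero by definition, so the positivity clause holds; taking $\lambda_{\mathcal{M}}=\frac{q^k-1}{q-1}$ finishes the argument.

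There is essentially no technical obstacle; the corollary is mostly bookkeeping on top of Theorem~\ref{nolines}. The only point requiring a moment's thought is that when $n\in\{2,3\}$ one cannot apply Theorem~\ref{nolines} with $q=n-2$ directly, because the hypothesis there requires $q\geq 2$. Minor-closedness rescues us: excluding $U_{2,2}$ or $U_{2,3}$ already forces exclusion of $U_{2,4}$, so one may always take $q\geq 2$. The companion observation that $\chi_{U_{2,n}}$ has a root at $n-1$, noted in the paragraph preceding the corollary, is not needed for the proof of the disjunction itself, but explains why the two alternatives deserve to be called a genuine dichotomy: containing all of $\{U_{2,n}\}_{n\geq 2}$ produces arbitrarily large real roots and hence rules out the existence of any uniform $\lambda_{\mathcal{M}}$.
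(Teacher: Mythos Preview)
Your argument is correct and matches the paper's own derivation, which simply states that the corollary follows by combining Theorem~\ref{nolines} with the observation that $\chi_{U_{2,n}}$ has a root at $n-1$. You have correctly filled in the details the paper leaves implicit, including the small adjustment needed when $n\in\{2,3\}$ to ensure $q\geq2$.
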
}

\section{The characteristic polynomial}
\label{sec:prelim}

The characteristic polynomial satisfies many identities similar to those satisfied by the chromatic polynomial.
The following is one such identity, which is particularly important for us.

 \begin{theorem}
\label{condel}
If $e$ is an element of a matroid $M$ that is neither a loop nor a coloop, then the characteristic polynomial of $M$ satisfies
\[
\chi_{M}(\lambda)=\chi_{M\setminus e}(\lambda)-\chi_{M/ e}(\lambda).
\]
\end{theorem}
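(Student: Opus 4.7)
The plan is to derive the identity from the subset-expansion (corank-nullity) form of the characteristic polynomial:
\[
\chi_{M}(\lambda) \;=\; \sum_{A \subseteq E} (-1)^{|A|}\,\lambda^{r(E)-r(A)},
\]
valid for every loopless matroid $M$. The equivalence between this expression and the Möbius-theoretic definition given in the paper is a standard consequence of Möbius inversion applied to the lattice of flats (each $A \subseteq E$ contributes to the flat $\cl(A)$, and the internal sums coincide with $\mu_M(\emptyset,\cl(A))$); I would either cite this or, if a self-contained argument is preferred, verify it directly via Möbius inversion on the interval $[\emptyset,F]$. Note too that the right-hand side is automatically $0$ whenever $M$ contains a loop $\ell$, since pairing each $A$ with $A\triangle\{\ell\}$ cancels all terms; so the expansion is consistent with the convention that $\chi_M=0$ in the loop case.

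With this tool in hand, the proof is a single split of the sum. I would partition $\{A : A\subseteq E\}$ according to whether $e\in A$ and write
\[
\chi_M(\lambda) \;=\; \sum_{B \subseteq E\setminus e}(-1)^{|B|}\lambda^{r(M)-r_M(B)} \;-\; \sum_{B \subseteq E\setminus e}(-1)^{|B|}\lambda^{r(M)-r_M(B\cup e)},
\]
where in the second sum $B=A\setminus\{e\}$ contributes the extra sign. It then remains to recognize each sum.

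For the first sum, the hypothesis that $e$ is not a coloop gives $r(M\setminus e)=r(M)$ and, for $B\subseteq E\setminus e$, $r_{M\setminus e}(B)=r_M(B)$, so the first sum equals $\chi_{M\setminus e}(\lambda)$. For the second, the hypothesis that $e$ is not a loop gives $r(M/e)=r(M)-1$ and $r_{M/e}(B)=r_M(B\cup e)-1$; hence $r(M)-r_M(B\cup e)=r(M/e)-r_{M/e}(B)$, and the second sum equals $\chi_{M/e}(\lambda)$. Combining the two identifications yields $\chi_M(\lambda)=\chi_{M\setminus e}(\lambda)-\chi_{M/e}(\lambda)$.

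The only subtlety — and the one potential obstacle — is the case where $M/e$ has a loop (this happens precisely when $e$ is in a two-element circuit, so the hypotheses do not rule it out). Here one must check that the subset-expansion identification of the second sum with $\chi_{M/e}(\lambda)$ is compatible with the convention $\chi_{M/e}=0$; but this is exactly the pairing cancellation noted above applied to $M/e$, so no separate case analysis is needed. The establishment of the subset-expansion form itself is the only nontrivial ingredient, and it is a standard piece of Möbius-function machinery for geometric lattices.
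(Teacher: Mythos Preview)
Your argument is correct. The subset-expansion form of $\chi_M$ is equivalent to the M\"obius-theoretic definition (this is Whitney's theorem, or equivalently Rota's computation of the M\"obius function of a geometric lattice), and once that is granted, your split over $e\in A$ versus $e\notin A$ together with the rank identities for deletion and contraction yields the formula cleanly. Your handling of the edge cases---loops created in $M/e$, or pre-existing loops in $M$---via the pairing cancellation is also sound.

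As for comparison with the paper: there is nothing to compare. The paper does not prove Theorem~\ref{condel}; it merely records it as a standard identity satisfied by the characteristic polynomial and uses it later. So your proposal supplies a proof where the paper gives none. If you want to match the paper's level of detail you could simply cite the result (e.g.\ from Brylawski--Oxley or Zaslavsky, both already in the bibliography); if you want a self-contained treatment, what you have written is the right argument, though you should either prove or explicitly cite the equivalence of the subset expansion with the lattice-of-flats definition, since that is the one nontrivial ingredient.
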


From Theorem~\ref{condel}, it is easy to see that a loopless matroid and its simplification have the same characteristic polynomial.
The second identity which we will need is a special case of a result of Brylawski~\cite{Brylawski}.
We first define the generalized parallel connection of two matroids $M_1$ and $M_2$ with ground sets $E_1$ and $E_2$, respectively, according to~\cite[page~441]{Oxley}.

Let $T= E_1\cap E_2$ and suppose that $M_1|T = M_2|T$.
Furthermore, suppose that $\cl_{M_1}(T)$ is a modular flat of $M_1$ and that each element of $\cl_{M_1} (T) \setminus T$ is either a loop or parallel to an element of $T$.
Let $N$ denote the common restriction $M_1 |T = M_2 |T$.
Then the \emph{generalized parallel connection across $N$} is the matroid $P_N (M_1 , M_2)$ whose flats are precisely the subsets $F$ of $E_1\cup E_2$ such that $F \cap E_1$ is a flat of $M_1$ and $F \cap E_2$ is a flat of $M_2$.

Suppose a graph $G$ has vertex set $V$ and edge set $E$, where $G=(V,E)=(V_1\cup V_2,E_1\cup E_2)$, such that $G_1=(V_1,E_1)$ and $G_2=(V_2,E_2)$ are themselves graphs.
It is a well-known result that, if the graph $(V_1\cap V_2,E_1\cap E_2)$ is isomorphic to $K_k$, the complete graph on $k$ vertices, then the chromatic polynomial $P_G(\lambda)$ is equal to $\frac{P_{G_1}(\lambda) P_{G_2}(\lambda)}{P_{K_k}(\lambda)}$.
We now state Brylawski's result which generalizes this result to matroids.

 \begin{theorem}[Brylawski (1975)]
\label{bry}
 Let $M$ be a generalized parallel connection of the matroids $M_1$ and $M_2$ across the modular flat $N$. Then
\[
\chi_{M}(\lambda)=\frac{\chi_{M_1}(\lambda)\chi_{M_2}(\lambda)}{\chi_{N}(\lambda)}.
\]
 \end{theorem}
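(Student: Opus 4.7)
The plan is to prove the identity by induction on $|E_2 \setminus T|$, where $T = E_1 \cap E_2$, driven by the deletion/contraction identity of Theorem~\ref{condel}. The base case $E_2 = T$ is immediate: then $M_2 = N$ and $P_N(M_1, M_2) = M_1$, so both sides equal $\chi_{M_1}(\lambda)$.

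For the inductive step I would pick an element $e \in E_2 \setminus \cl_{M_2}(T)$, assuming such an element exists. The key structural claim is that deletion and contraction commute with the generalized parallel connection on such elements:
\[
P_N(M_1, M_2) \setminus e = P_N(M_1, M_2 \setminus e), \qquad P_N(M_1, M_2)/e = P_N(M_1, M_2/e).
\]
Both right-hand sides are again legitimate generalized parallel connections precisely because $e \notin \cl_{M_2}(T)$ ensures $(M_2 \setminus e)|T = (M_2/e)|T = N$; the equalities themselves follow from a short check using the flat characterization given just before the theorem, together with the fact that $e \notin E_1$ and so $M_1$ is unaffected. Assuming $e$ is neither a loop nor a coloop of $M := P_N(M_1, M_2)$ (loops and coloops are handled directly: a loop of $M_2$ outside $T$ forces both sides to vanish, while a coloop of $M_2$ outside $T$ lets us factor out a common $(\lambda - 1)$), applying Theorem~\ref{condel} to $M$, invoking the inductive hypothesis on the two smaller generalized parallel connections, and then applying Theorem~\ref{condel} to $M_2$ produces
\[
\chi_M(\lambda) = \frac{\chi_{M_1}(\lambda)}{\chi_N(\lambda)} \bigl(\chi_{M_2 \setminus e}(\lambda) - \chi_{M_2/e}(\lambda)\bigr) = \frac{\chi_{M_1}(\lambda)\chi_{M_2}(\lambda)}{\chi_N(\lambda)}.
\]

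The main obstacle I anticipate is handling the case when every element of $E_2 \setminus T$ lies in $\cl_{M_2}(T)$, and more generally any $e \in \cl_{M_2}(T) \setminus T$ that is not parallel to an element of $T$. Note that the definition restricts the structure of $\cl_{M_1}(T) \setminus T$ but imposes no analogous condition on the $M_2$ side, so such elements can genuinely occur. For these $e$, contracting identifies elements of $T$, so $(M_2/e)|T \neq N$ and the commutation above fails. To close the induction I would need a separate argument for these ``dependent'' extensions---either by showing that they contribute matching factors to numerator and denominator, thus reducing to a simpler generalized parallel connection on a smaller ground set, or by falling back on a direct M\"obius-function proof that uses the bijection between flats of $M$ and pairs $(F_1, F_2) \in L(M_1) \times L(M_2)$ satisfying $F_1 \cap T = F_2 \cap T$, together with the modularity of $\cl_{M_1}(T)$ to factor $\mu_M(\emptyset, F)$ as the appropriate combination of M\"obius values on $L(M_1)$, $L(M_2)$ and $L(N)$.
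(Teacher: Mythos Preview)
The paper does not supply a proof of this statement: Theorem~\ref{bry} is quoted as a result of Brylawski~\cite{Brylawski} and used as a black box. So there is nothing to compare your argument against within the paper itself.

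As for your proposal on its own merits, the deletion--contraction induction is a standard and workable route, and the commutation identities $P_N(M_1,M_2)\setminus e = P_N(M_1,M_2\setminus e)$ and $P_N(M_1,M_2)/e = P_N(M_1,M_2/e)$ for $e\in E_2\setminus\cl_{M_2}(T)$ are correct and easily verified from the flat description. You are also right to flag the residual case $E_2\setminus T \subseteq \cl_{M_2}(T)$ as the genuine obstacle. Here is one clean way to close it: in that situation every element of $E_2\setminus T$ is either a loop of $M_2$ (forcing both sides to vanish) or parallel in $M_2$ to some element of $T$ (since $N=M_2|T$ spans it); parallel elements leave the characteristic polynomial unchanged, and deleting such an $e$ from $M$ and from $M_2$ again commutes with the generalized parallel connection, so one reduces $|E_2\setminus T|$ without difficulty. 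With that observation your induction goes through. Brylawski's original argument, by contrast, is lattice-theoretic: it exploits the modular-flat hypothesis directly to factor the M\"obius function over the sublattice determined by $N$, which is closer to the second fallback you sketch at the end.
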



\section{Tree-decompositions}
\label{tree}

This section is devoted to defining matroid tree-width and developing some techniques for considering matroids of bounded tree-width.

A \emph{tree-decomposition} of a matroid $M$ is a pair $(T,\tau)$, where $T$ is a tree and $\tau :E(M)\rightarrow V(T)$ is an arbitrary mapping.
For convenience, let $V(T)=\{v_1,v_2,\dots ,v_\ell\}$ and let $E_i=\tau ^{-1}(v_i)$ for all $i$ in $\{1,2,\dots ,\ell\}$.
We say that $E_i$ is the \emph{bag corresponding to $v_i$}.
Let $c_i$ be the number of components in $T- v_i$ and let $T_{i,1},T_{i,2},\dots ,T_{i,c_i}$ denote the components in $T- v_i$.
For $j\in\{1,2,\dots ,c_i\}$, let $B_{i,j}$ be the subset of $E(M)$ given by $\{e|\tau (e)\in V(T_{i,j})\}$.
The vertex $v_i$ is said to \emph{display} the subsets $B_{i,1},B_{i,2},\dots ,B_{i,c_i}$ of $E(M)-E_i$.
Note that these subsets are pairwise disjoint.
We say that the \emph{rank defect} of $B_{i,j}$, denoted $\rd(B_{i,j})$, is equal to $r(M)-r(E(M) - B_{i,j})$.
Note that this number is the same as the size of the smallest set $I\subseteq B_{i,j}$ such that all of the elements in $B_{i,j}-I$ are in the closure of $E(M)-B_{i,j}$ in the matroid $M/I$.
Clearly $I$ is an independent set in $M$.
The rank defect is therefore a measure of the amount of rank contributed to $M$ solely by the set $B_{i,j}$.
The \emph{node width of a vertex $v_i$}, written $\nw(v_i)$, is equal to $r(M)-\sum\limits _{j=1} ^{c_i} \rd(B_{i,j})$. Note that in the degenerate case where $|V(T)|=1$, the node width of the single vertex of $T$ is equal to $r(M)$.
The \emph{width of $(T,\tau )$} is the maximum node width of all vertices in $V(T)$.
The \emph{matroid tree-width of $M$}, written $\tw(M)$, is equal to the minimum width of all tree-decompositions of $M$.
We let $v(M)$ be the number of vertices in the smallest tree over all of the tree-decompositions with width equal to the tree-width of $M$.
If $(T,\tau)$ is a tree-decomposition of $M$ with width equal to $\tw (M)$ and if $|V(T)|=v(M)$, then we say that $(T,\tau)$ is a \emph{good tree-decomposition} of $M$.

\begin{figure}[htb]
\center
\includegraphics[scale=1]{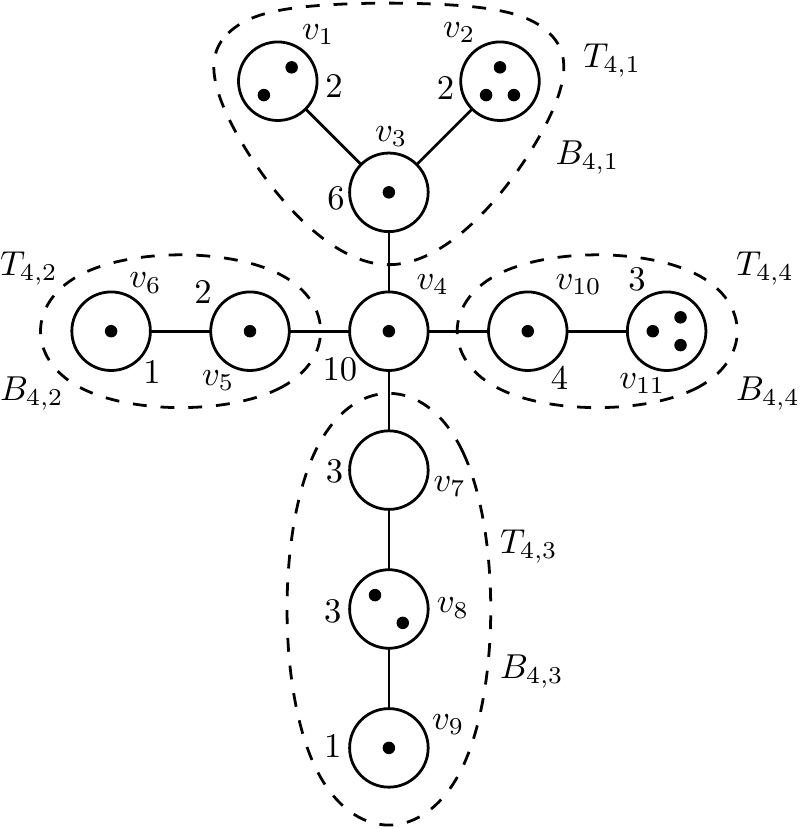}
\caption{A sample tree-decomposition of the uniform matroid $U_{11,16}$.  Each circle is labeled by the vertex of the tree that it represents.  The dots inside each circle represent the matroid elements that are in the bag corresponding to that vertex.  Each circle is also labeled with the node width of its vertex.}
\label{tentacles}
\end{figure}

\begin{example} We give a sample tree-decomposition of $U_{11,16}$ (see Figure~\ref{tentacles}).
Due to the symmetry of the matroid elements, it is not necessary to label the elements of the matroid.
We have illustrated the assignment of elements into bags by placing dots within circles.
Each dot represents an element in $U_{11,16}$ and each circle represents a vertex of the tree in the tree-decomposition.
The vertices of the tree are labeled unambiguously by their names and their node widths.
Each dashed region indicates a subtree of the tree $T$ and these subtrees comprise the connected components of the tree $T\backslash v_4$.
For example, the subtree $T_{4,3}$ consists of the vertex set $\{v_7,v_8,v_9\}$ and edge set $\{v_7v_8,v_8v_9\}$.
As a consequence of each dashed region indicating a connected component of $T\backslash v_4$, the matroid elements within the dashed regions are those of the subsets $B_{4,1},B_{4,2},B_{4,3}$ and $B_{4,4}$ of $E(U_{11,16})- E_4$ displayed by the vertex $v_4$, where $E_4$ is the single-element bag associated with $v_4$.
To compute the node width for $v_4$, note that $\rd (B_{4,1})=1$ and $\rd (B_{4,2})=\rd (B_{4,3})=\rd (B_{4,4})=0$.
Hence $\nw (v_4)=r(U_{11,16})-1=10$.
Note that this is not an optimal tree-decomposition of $U_{11,16}$.
For example, a tree-decomposition whose tree is a path where each bag contains exactly one matroid element has width six.
\end{example}

In addition to that used previously in this section, we employ an alternate use of the term ``display'' as follows.
Let $e=uw$ be an edge of $T$, let $T_u$ and $T_w$ be the two components of $T\ba e$ containing $u$ and $w$ respectively, and let $U$ and $W$ be the sets of matroid elements $U=\left\{ x | \tau(x)\in V(T_u) \right\}$ and $W=\left\{ x | \tau(x)\in V(T_w) \right\}$. We say that the edge $e$ \textit{displays} the sets $U$ and $W$.

We now prove a lemma that will lend some structure to good tree-decompositions, which we establish in the following corollary.

\begin{lemma} \label{red}
Let $(T,\tau)$ be a tree-decomposition of a matroid $M$. Suppose that $T$ has an edge $e=uw$ that displays the sets $U,W\subseteq E(M)$, where $U\subseteq \cl (W)$. Then there exists another tree-decomposition $(T',\tau')$ of $M$ having width at most the width of $(T,\tau)$, such that $|V(T')|<|V(T)|$.
\end{lemma}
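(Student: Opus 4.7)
The plan is to collapse the entire $u$-side of $T$ into the single vertex $w$. Explicitly, I would take $T'$ to be the subtree $T_w$ (the component of $T\setminus e$ containing $w$), and define $\tau'$ to agree with $\tau$ on $W$ while sending every element of $U$ to $w$. Because $T_u$ contains at least one vertex, $|V(T')|<|V(T)|$ is immediate, so the only thing to verify is that no node width has been increased by this operation.

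For a vertex $v\in V(T_w)\setminus\{w\}$, I would argue that the multiset of displayed subsets at $v$ is the same in $(T',\tau')$ as in $(T,\tau)$. The reason is structural: removing $v$ from $T$ leaves exactly one component that contains the edge $e$, and hence contains all of $T_u$, so the displayed subset at $v$ corresponding to that component already absorbed $U$ in $(T,\tau)$; the remaining components of $T-v$ lie entirely inside $T_w-v$ and are unaffected by the reassignment. In $T'-v$, the vertex $w$ sits in the same component as before, and the elements of $U$ have been relocated to $w$, so the same subset is displayed. Thus $\nw_{T'}(v)=\nw_T(v)$ for every such $v$.

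The only nontrivial point, and the heart of the argument, is the node width at $w$. The components of $T-w$ are the subtree $T_u$ together with the components $C_1,\dots,C_s$ of $T_w-w$, so $w$ displays the sets $U,W_1,\dots,W_s$, whereas in $T'-w$ only the components $C_1,\dots,C_s$ survive and $w$ displays only $W_1,\dots,W_s$. Hence
\[
\nw_{T'}(w)=\nw_T(w)+\rd(U).
\]
The key observation — and the only place the hypothesis $U\subseteq\cl(W)$ is used — is that $\rd(U)=0$. Since $U$ and $W$ partition $E(M)$, we have $E(M)\setminus U=W$, and $U\subseteq\cl(W)$ yields $r(M)=r(U\cup W)=r(W)$; therefore $\rd(U)=r(M)-r(W)=0$. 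This ensures $\nw_{T'}(w)=\nw_T(w)$ and completes the bound on the width. The only thing to watch out for is the edge case $|V(T_w)|=1$, but there the same rank-defect computation works verbatim, with every element sitting in the single bag at $w$.
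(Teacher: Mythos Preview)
Your proof is correct and essentially identical to the paper's: both take $T'$ to be the $w$-side of $T\setminus e$, reassign all of $U$ to the bag at $w$, observe that displayed sets at vertices other than $w$ are unchanged, and use $U\subseteq\cl(W)$ to conclude $\rd(U)=0$ so that $\nw_{T'}(w)=\nw_T(w)$. Your explicit treatment of the formula $\nw_{T'}(w)=\nw_T(w)+\rd(U)$ and of the degenerate case $|V(T_w)|=1$ is slightly more detailed than the paper's version, but the argument is the same.
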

\begin{proof}
Consider $T$. Let $T_1$, $T_2$, \dots, $T_\ell$ be the connected components of $T\ba w$, where $u\in T_1$. Note that $U=\tau^{-1}(V(T_1))$. Let $T'$ be $T\ba T_1$.  We define $\tau'$ such that $\tau'(x)=\tau(x)$ if $x\notin U$ and $\tau'(x)=w$ if $x\in U$. Clearly, $|V(T')|<|V(T)|$.
Take $s\in V(T')$. If $s\ne w$ then $s$ displays the same subsets of $E(M)$ in $(T',\tau')$ and $(T,\tau)$, so $\nw_{(T',\tau')}(s)=\nw_{(T,\tau)}(s)$.
%

We conclude this proof by showing that $\nw_{(T',\tau')}(w)=\nw_{(T,\tau)}(w)$.
In the original tree-decomposition, $(T,\tau )$, $w$ displays the subsets $B_1$, $B_2$, \dots, $B_\ell$, where $B_i=\tau^{-1}(V(T_i))$. Note that $B_1=U$.
Whereas in $(T',\tau')$, $w$ displays the subsets $B_2$, $B_3$, \dots, $B_\ell$. Since $B_1=U\subseteq \cl(E(M)-U)$, we have $\rd(B_1)=0$. It follows that $\nw_{(T',\tau')}(w)=\nw_{(T,\tau)}(w)$, as required.
%
\end{proof}

The next result follows immediately from Lemma~\ref{red}.

\begin{corollary}
\label{hall}
Let $M$ be a matroid with tree-width $k$.
If $(T,\tau)$ is a good tree-decomposition of $M$, then, for every pair of subsets $U$ and $W$ of $E(M)$ displayed by an edge of $T$, neither $r(U)$ nor $r(W)$ is equal to $r(M)$.
\end{corollary}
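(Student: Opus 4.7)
The plan is to argue by contradiction, with Lemma~\ref{red} doing essentially all of the work. Suppose $(T,\tau)$ is a good tree-decomposition of $M$ but that some edge $e = uw$ of $T$ displays sets $U, W \subseteq E(M)$ with, say, $r(W) = r(M)$; by symmetry of the roles of $U$ and $W$, it suffices to derive a contradiction in this case. The goal is to exhibit a tree-decomposition of $M$ of width at most $\tw(M)$ but with strictly fewer vertices than $T$, contradicting the fact that $(T,\tau)$ is good.

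The first step is to translate the rank hypothesis into the closure condition required by Lemma~\ref{red}. Since $\cl(W)$ is a flat of $M$ with $r(\cl(W)) = r(W) = r(M)$, and the only flat of $M$ of rank $r(M)$ is $E(M)$, we obtain $\cl(W) = E(M)$. In particular, $U \subseteq \cl(W)$, which is exactly the hypothesis of Lemma~\ref{red} applied to the edge $e$ and the displayed sets $U, W$.

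Invoking Lemma~\ref{red} then yields a tree-decomposition $(T', \tau')$ of $M$ with width at most the width of $(T,\tau)$ and with $|V(T')| < |V(T)|$. Because $(T,\tau)$ is good, its width equals $\tw(M)$, so the width of $(T',\tau')$ is at most $\tw(M)$; but $\tw(M)$ is by definition the minimum possible width of a tree-decomposition, so the width of $(T',\tau')$ must be exactly $\tw(M)$. Thus $(T',\tau')$ is a tree-decomposition of $M$ attaining the minimum width $\tw(M)$ on strictly fewer vertices than $(T,\tau)$, contradicting the minimality of $|V(T)| = v(M)$ among such tree-decompositions.

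There is no real obstacle in this argument; all of the combinatorial content has been packaged into Lemma~\ref{red}. The only step worth flagging is the closure identification $\cl(W) = E(M)$, which is immediate from the rank hypothesis together with the basic matroid fact that $E(M)$ is the unique flat of rank $r(M)$.
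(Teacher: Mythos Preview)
Your proof is correct and matches the paper's approach exactly: the paper simply states that the corollary ``follows immediately from Lemma~\ref{red},'' and your argument spells out precisely how---translating $r(W)=r(M)$ into $U\subseteq\cl(W)$ and then invoking Lemma~\ref{red} to contradict the minimality of $|V(T)|$ in a good tree-decomposition.
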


For a good tree-decomposition of a matroid, the preceding result implies that every leaf in the tree corresponds to a set of elements in the matroid that, informally speaking, has some substance.
That is, the set is not in the closure of the rest of the elements in the matroid.
In the corollary following the next lemma, we bound the rank of such a set of elements.

The following lemma is a collection of fundamental results for tree-width and tree decompositions. The first was proved in~\cite{HW}.
\begin{lemma}
\label{twprop}
Let $M$ be a matroid.
Then
\begin{enumerate}[label=(\roman*)]
\item $\tw (M)\geq \tw (N)$ if $N$ is a minor of $M$;
\item $\tw (M)\leq r(M)$, where equality holds if $M$ is a projective geometry; and
\item in any tree-decomposition with tree $T$, the rank of a bag is at most the node width of the corresponding vertex, with equality holding at leaves of $T$.
\end{enumerate}
\end{lemma}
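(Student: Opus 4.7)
Part (i) is the main theorem of~\cite{HW}, so my plan is to cite it. Parts (ii) and (iii) admit short, independent proofs that I sketch below.

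For part (iii), the plan is to show that $\rd$ is super-additive on disjoint subsets of $E(M)$, using only submodularity of the rank function. Explicitly, for disjoint $A, B \subseteq E(M)$, setting $X = E(M)\setminus A$ and $Y = E(M)\setminus B$ gives $X \cup Y = E(M)$ and $X \cap Y = E(M)\setminus(A\cup B)$, so the submodular inequality $r(X)+r(Y)\geq r(X\cup Y)+r(X\cap Y)$ rearranges to $\rd(A)+\rd(B)\leq \rd(A\cup B)$. Iterating over the displayed sets $B_{i,1},\dots,B_{i,c_i}$ at a vertex $v_i$, which partition $E(M)\setminus E_i$, yields $\sum_j \rd(B_{i,j})\leq \rd(E(M)\setminus E_i)=r(M)-r(E_i)$, and hence $\nw(v_i)\geq r(E_i)$. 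When $v_i$ is a leaf there is a single displayed set, so the inequality becomes an equality.

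For part (ii), the upper bound $\tw(M)\leq r(M)$ is realised by the single-vertex tree-decomposition, which has node width exactly $r(M)$. My plan for the matching lower bound on $\tw(PG(r-1,q))$ is to drive any tree-decomposition down to the single-vertex one via Lemma~\ref{red}. The key combinatorial ingredient is the well-known fact that no two hyperplanes of $PG(r-1,q)$ cover its ground set; a brief inclusion--exclusion gives $|E|-|H_1\cup H_2|=q^{r-2}(q-1)>0$. Consequently, in any edge $e$ of a tree-decomposition of $PG(r-1,q)$ with $|V(T)|\geq 2$, the sets $U, W$ displayed by $e$ partition the ground set and cannot both be contained in hyperplanes; one of them, say $W$, spans $M$, so $U\subseteq \cl(W)$. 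Applying Lemma~\ref{red} produces a tree-decomposition with strictly fewer vertices and no greater width. Iterating this reduction terminates at the single-vertex tree-decomposition of width $r(M)$, showing $\tw(PG(r-1,q))\geq r(M)$.

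I expect the only genuinely non-formal step to be the hyperplane-cover observation; everything else is an unpacking of submodularity and of Lemma~\ref{red}.
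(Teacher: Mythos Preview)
Your proposal is correct and follows essentially the same route as the paper. Part (iii) is identical in content: the paper applies submodularity to the complements $E(M)\setminus B_{i,j}$ and arrives at the same inequality you obtain by phrasing it as super-additivity of $\rd$. For part (ii), the paper invokes Corollary~\ref{hall} (the packaged consequence of Lemma~\ref{red}) to conclude that a good tree-decomposition of a projective geometry has a single vertex, whereas you iterate Lemma~\ref{red} directly; the underlying idea and the key input---that in any bipartition of $PG(r-1,q)$ one side must span, equivalently that two hyperplanes cannot cover the ground set---are the same, and you supply the short inclusion--exclusion count that the paper leaves implicit.
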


\begin{proof}
Firstly, (i) was proved in~\cite{HW}. For (ii), consider any tree decomposition $(T,\tau)$ of $M$. By definition of node width, no vertex of $T$ can have node width larger than $r(M)$, thus $\tw(M)\leq r(M)$. In the case where $M$ is a projective geometry, to demonstrate that $\tw (M)=r(M)$, it is sufficient to show that a good tree decomposition for $M$ has just one vertex. To that end, suppose that $T$ has an edge that displays $U\subseteq E(M)$ and $W\subseteq E(M)$. Then $\{U, W\}$ partitions $E(M)$. However, for every bipartition of the elements of a projective geometry into sets $U$ and $W$, either $r(U)=r(M)$ or $r(W)=r(M)$, and it follows from Corollary~\ref{hall} that a good tree decomposition for $M$ has just one vertex.

To prove (iii), using the notation set up in our definition of tree width, first note that since $B_{i,1}$,..., $B_{i,c_i}$ are a collection of pairwise disjoint subsets of $E(M)$, by submodularity of the rank function,
\begin{align*}
    r(E(M)-(B_{i,1}\cup B_{i,2})) &=r((E(M)-B_{i,1})\cap(E(M)-B_{i,2})) \\
        & \leq r(E(M)-B_{i,1}) + r(E(M)-B_{i,2})-r(M).
\end{align*}
By repeatedly applying submodularity, we see that $$r(E(M)-(B_{i,1}\cup \dots \cup B_{i,c_i})) \leq \sum\limits _{j=1} ^{c_i} r(E(M)-B_{i,j}) - (c_i-1)r(M).$$ Comparing the rank of the bag of matroid elements $E_{v_i}$ associated with vertex $v_i$ to the node width of $v_i$, we have
\begin{align*}
    r(E_{v_i}) & = r(E(M)-(B_{i,1}\cup \dots \cup B_{i,c_i})) \\
        & \leq \sum\limits _{j=1} ^{c_i} r(E(M)-B_{i,j}) - (c_i-1)r(M) \\
        & = r(M)- \sum\limits _{j=1} ^{c_i} (r(M)- r(E(M)-B_{i,j})) \\
        & = \nw(v_i),
\end{align*}
as required. In the case where $v_i$ is a leaf of $T$, equality holds since $\nw(v_i)=r(M) - \rd(E(M)-E_{v_i}) = r(M)- (r(M)-r(E_{v_i})) = r(E_{v_i})$.
\end{proof}

The next result follows immediately from Lemma~\ref{twprop}.

\begin{corollary}
\label{octopus}
Take $M$ with width-$k$ tree-decomposition $(T,\tau)$.
If $v$ is a vertex of $T$, then the set $\tau ^{-1}(v)$ has rank at most $k$.
\end{corollary}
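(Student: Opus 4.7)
The plan is that this is an immediate consequence of Lemma~\ref{twprop}(iii), which has just been established. First I would note that by the definition of the width of a tree-decomposition, every vertex $v$ of $T$ satisfies $\nw(v)\leq k$, since the width of $(T,\tau)$ is the maximum node width over all vertices of $T$.

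Next I would invoke Lemma~\ref{twprop}(iii), which states that the rank of the bag corresponding to any vertex $v$ of $T$ is at most $\nw(v)$. Writing $\tau^{-1}(v)$ for the bag at $v$, this gives $r(\tau^{-1}(v))\leq \nw(v)\leq k$, which is exactly the claim.

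Since the work has already been done in the lemma, there is no real obstacle here; the corollary is simply a repackaging in which one drops the mention of node width and replaces it with the global bound $k$ coming from the hypothesis that $(T,\tau)$ has width $k$. The only thing to be careful about is the degenerate case $|V(T)|=1$, but in that case $\nw(v)=r(M)$ and $\tau^{-1}(v)=E(M)$, so the inequality $r(\tau^{-1}(v))\leq \nw(v)\leq k$ still reads correctly (and in fact holds with equality on the left, forcing $r(M)\leq k$).
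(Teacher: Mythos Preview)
Your proposal is correct and matches the paper's approach: the corollary is stated as following immediately from Lemma~\ref{twprop}, and your argument spells out exactly this, combining part~(iii) with the definition of width.
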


In Corollaries~\ref{hall} and~\ref{octopus}, we showed that a leaf in the tree of a good tree-decomposition corresponds to a set of elements that has some substance, but not too much substance.
We now find a small cocircuit in the matroid, when it is representable over a finite field.

\begin{lemma}
\label{sharktank}
Let $M$ be a simple $GF(q)$-representable matroid for some prime power $q$ and let $M$ have tree-width $k$ for some positive integer $k$.
Then $M$ has a cocircuit with at most $q^{k-1}$ elements.
\end{lemma}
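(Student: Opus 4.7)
The plan is to combine the tree-decomposition structure with a fundamental-cocircuit argument inside the ambient projective geometry over $GF(q)$.

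First, take a good tree-decomposition $(T,\tau)$ of $M$ of width $k$ and choose a leaf $v$ of $T$ (or the unique vertex when $|V(T)|=1$). Set $F=\cl_M(E_v)$. By Lemma~\ref{twprop}(iii), $r(F)=r(E_v)\leq k$; and by Corollary~\ref{hall} (vacuous in the single-vertex case), $r_M(E(M)\setminus E_v)<r(M)$, so also $r_M(E(M)\setminus F)<r(M)$ since $E(M)\setminus F\subseteq E(M)\setminus E_v$. Thus $F$ is a flat of $M$ of rank at most $k$ whose complement does not span $M$.

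Next, using the $GF(q)$-representation of $M$, build a basis $B$ of $M$ with a convenient structure. Take $b_1,\dots,b_t$ to be a basis of $E(M)\setminus F$ where $t=r_M(E(M)\setminus F)$, and extend to a basis $B=\{b_1,\dots,b_t,c_1,\dots,c_{r(M)-t}\}$ of $M$. Each $c_j$, being independent of $\cl_M(E(M)\setminus F)=\mathrm{span}(b_1,\dots,b_t)$, must lie in $E(M)\setminus\cl_M(E(M)\setminus F)\subseteq F$; in particular all the $c_j$ lie in $F$. Now consider the fundamental cocircuit $C^{*}=E(M)\setminus\cl_M(B\setminus\{c_1\})$ of $c_1$ with respect to $B$. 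Let $\alpha$ be the linear functional on $GF(q)^{r(M)}$ that reads off the $c_1$-coordinate in basis $B$; then $\cl_M(B\setminus\{c_1\})=\ker(\alpha)\cap E(M)$, so $C^{*}=\{e\in E(M):\alpha(e)\neq 0\}$.

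To bound $|C^{*}|$, observe that every $e\in E(M)\setminus F$ lies in $\mathrm{span}(b_1,\dots,b_t)\subseteq\ker(\alpha)$, so $\alpha(e)=0$; hence $C^{*}\subseteq F$. Restrict $\alpha$ to $\hat F=\mathrm{span}(F)$, a subspace of dimension $r(F)\leq k$. Since $\alpha(c_1)=1$ and $c_1\in F$, the restriction $\alpha|_{\hat F}$ is a nonzero linear functional, so its kernel is a codimension-one subspace of $\hat F$, and its complement in $\hat F$ contains exactly $q^{r(F)-1}$ projective points. Because $M$ is simple, the elements of $E(M)$ contained in $\hat F$ inject into distinct projective points, so $|C^{*}|\leq q^{r(F)-1}\leq q^{k-1}$, as required.

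The main step to get right is the realization that the fundamental cocircuit of $c_1$ is entirely contained in $F$: once that observation is in place, the size bound reduces to counting the nonzero values of a coordinate linear functional on a rank-$r(F)$ subspace, which is a standard projective-geometry count. The auxiliary verifications—that a basis of $E(M)\setminus F$ can be extended into $F$ (via the submodular inequality $r(F)+r_M(E(M)\setminus F)\geq r(M)$) and that a good tree-decomposition provides the necessary flat—are comparatively routine.
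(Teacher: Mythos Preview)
Your proof is correct and follows essentially the same approach as the paper. Both arguments pick a leaf bag $E_v$ of a good tree-decomposition, observe it has rank at most $k$ and is not contained in the closure of its complement, produce a cocircuit of $M$ lying inside this small-rank set, and then bound its size by the hyperplane-complement count in $PG(k-1,q)$; the only difference is presentational---you construct the cocircuit explicitly as a fundamental cocircuit and read off the bound via a coordinate linear functional, whereas the paper simply notes that $\cl_M(E(M)\setminus E_v)$ lies in some hyperplane and quotes the fact that a cocircuit of $PG(k-1,q)$ has at most $q^{k-1}$ elements.
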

\begin{proof}
Let $(T,\tau)$ be a good tree-decomposition of $M$. In the case where $v(M)\geq 2$,
$T$ contains a leaf $w$.
Let $E_w=\tau ^{-1}(w)$.
By Lemma~\ref{red}, $E_w$ is not contained in the flat $\cl _M(E(M)-E_w)$.
Hence this flat is contained in a hyperplane of $M$, whose complement is contained in $E_w$.
Evidently there is a cocircuit $C^*$ contained in $E_w$.
Corollary~\ref{octopus} implies that $E_w$ has rank at most $k$.
As $M$ is $GF(q)$-representable and simple, we know that $E_w$ is a restriction of $PG(k-1,q)$.
The largest cocircuit in $PG(k-1,q)$ is obtained by deleting a hyperplane, which leaves $q^{k-1}$ elements.
Hence $|C^*|\leq q^{k-1}$.

In the case where $v(M)=1$, we have $r(M)=k\geq 1$, thus $M$ is a restriction of $PG(k-1,q)$. With rank at least $1$, $M$ contains a cocircuit, and by the same argument as above, $M$ contains a cocircuit $C^*$ with $|C^*|\leq q^{k-1}$.
\end{proof}

During the remainder of this paper, for a simple $GF(q)$-representable matroid $M$, we denote by $\overline{M}_q$ the projective geometry $PG(r(M)-1,q)$ of which $M$ is a spanning restriction.
If $S \subseteq E(\overline{M}_q)-E(M)$, then let $M^S$ denote the restriction of $\overline{M}_q$ to the elements of $E(M)\cup S$.
Take $(T,\tau)$, a tree-decomposition of $M$.
For edge $uw$ in $T$, let $U'$ and $W'$ be the subsets of $E(M)$ displayed by $uw$, where $\tau ^{-1}(u)\subseteq U'$.
Let $U$ be the subset of elements of $\overline{M}_q$ obtained by taking the closure $\cl _{\overline{M}_q}(U')$, and likewise, let $W=\cl _{\overline{M}_q}(W')$.
We say that the \emph{neck of $uw$ with respect to $\overline{M}_q$}, or simply the \emph{neck of $uw$} when the projective geometry is clear, is the set of elements in $U\cap W$.
Note that the neck of each edge is a projective geometry over $GF(q)$. We say that the \emph{external neck of $uw$ with respect to $\overline{M}_q$}, or simply the \emph{external neck of $uw$} is the intersection of the neck of $uw$ with $E(\overline{M}_q)-E(M)$.


\begin{lemma}
\label{starfish1}
Let $(T,\tau)$ be a tree-decomposition of $M$ with width $\tw(M)$ and let $S$ be a subset of the external neck of an edge of $T$. Then $\tw(M)=\tw(M^S)$.
\end{lemma}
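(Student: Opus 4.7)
The plan is to prove the two inequalities $\tw(M^S)\ge \tw(M)$ and $\tw(M^S)\le \tw(M)$ separately. The first is immediate: since $S\subseteq E(\overline{M}_q)-E(M)$, the matroid $M$ equals $M^S\setminus S$, hence is a minor of $M^S$, and Lemma~\ref{twprop}(i) gives the bound.

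For the reverse inequality, I will convert the given tree-decomposition $(T,\tau)$ of $M$ into a tree-decomposition $(T,\tau')$ of $M^S$ of the same width by placing every element of $S$ in a single bag. Let $uw$ be the edge of $T$ whose external neck contains $S$, and let $U'$ and $W'$ be the subsets of $E(M)$ displayed by $uw$, chosen so that $\tau^{-1}(u)\subseteq U'$. I define $\tau'\colon E(M^S)\to V(T)$ to agree with $\tau$ on $E(M)$ and to send every element of $S$ to $u$. The claim is that every vertex of $T$ has the same node width under $\tau'$ as under $\tau$.

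Two simple observations drive the computation. First, because $S$ lies in the neck of $uw$, we have $S\subseteq \cl_{\overline{M}_q}(U')\cap \cl_{\overline{M}_q}(W')$; in particular $r(M^S)=r(M)$. Second, since $M^S$ is a restriction of $\overline{M}_q$, its rank function agrees with $r_{\overline{M}_q}$, and hence with $r_M$ on subsets of $E(M)$. Fix $v\in V(T)$, and let $B_j$ be any subset displayed by $v$ under $\tau$, with corresponding subset $B_j'$ displayed by $v$ under $\tau'$. Then $B_j'$ equals either $B_j\cup S$ (when the component of $T-v$ giving rise to $B_j$ contains $u$) or $B_j$ (otherwise). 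In the first case, $E(M^S)-B_j'=E(M)-B_j$, and the rank defect is unchanged. In the second case, $E(M^S)-B_j'=(E(M)-B_j)\cup S$, and I will show that $E(M)-B_j$ contains either $U'$ or $W'$; combined with the first observation, this forces $S\subseteq \cl_{\overline{M}_q}(E(M)-B_j)$, so the rank defect is again unchanged.

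The only delicate step, and the place I expect will require some care, is verifying the containment $U'\subseteq E(M)-B_j$ or $W'\subseteq E(M)-B_j$ in the second case. This reduces to a short case analysis on the position of $v$ in $T$ relative to $uw$: if $v\notin\{u,w\}$, then exactly one component of $T-v$ contains the edge $uw$ (and hence both $u$ and $w$), while every other component lies strictly on one side of $uw$, so its displayed matroid subset is disjoint from either $W'$ or $U'$; the boundary cases $v=u$ and $v=w$ are handled similarly. Summing $\rd_{M^S}(B_j')=\rd_M(B_j)$ across $j$ gives $\nw_{(T,\tau')}(v)=\nw_{(T,\tau)}(v)$ for every $v$, so $(T,\tau')$ has width $\tw(M)$ and the proof is complete.
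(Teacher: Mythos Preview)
Your proof is correct and follows essentially the same approach as the paper: build $(T,\tau')$ by sending $S$ to $u$, then check that every rank defect is preserved, splitting into the case where $S$ joins $B_j$ and the case where it does not. You are simply more explicit than the paper in two places: you state the inequality $\tw(M^S)\ge\tw(M)$ via Lemma~\ref{twprop}(i) (the paper leaves this implicit), and you spell out via a position-of-$v$ case analysis why $E(M)-B_j$ contains $U'$ or $W'$, whereas the paper just asserts that $S\subseteq\cl_{M^S}(E(M)-B_j)$ ``by construction''.
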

\begin{proof}
Let $uw$ be an edge of $T$ whose neck contains $S$.
Now $M^S$ has a tree-decomposition $(T,\tau')$ obtained from $(T,\tau)$ by letting $\tau '(x)=\tau (x)$ when $x\in E(M)$ and by letting $\tau '(x)=u$ when $x\notin E(M)$.
Thus the decomposition is the same except that we add the elements of $S$ to the bag corresponding to $u$. (We could equally well add them to the bag corresponding to $w$.)

We show that, for each edge of $T$, the corresponding subsets of $E(M)$ and $E(M^S)$ displayed by this edge have the same rank defects, and conclude that $M$ and $M^S$ have the same tree-width.
By the definition of rank defect, if the elements of $S$ were added to a set $B$, then $\rd _{M^S}(B\cup S)=r(M^S)-r_{M^S}(E(M)-B)=r(M)-r_{M}(E(M)-B)=\rd_{M}(B)$.
Hence the rank defect of $B$ in $M$ is equal to the rank defect of $B\cup S$ in $M^S$.
If the elements of $S$ were not added to a set $B$ that is displayed by an edge of $T$, then $S$ is a subset of the closure of $E(M)-B$ in $M^S$ by construction. The rank defect again remains unchanged, as $\rd_{M^S}(B)=r(M^S)-r_{M^S}((E(M)-B)\cup S)=r(M)-r_M(E(M)-B)=\rd_M(B)$.
Therefore each vertex in $T$ has the same node width in $(T,\tau)$ and $(T,\tau ')$.
It follows that $\tw(M^S)=\tw(M)$.
\end{proof}

\begin{lemma}
\label{starfish2}
Let $M$ be a simple $GF(q)$-representable matroid with tree-decomposition having tree $T$. Let
$uw$ be an edge of $T$ and suppose that $S=\{s_1,s_2,\dots ,s_n\}$ is the external neck of $uw$.
Then,
\begin{equation}
\chi _{M}(\lambda)=\chi _{M^{S}}(\lambda) + \sum _{i=1}^n \chi _{M^{\{s_1,s_2,\dots ,s_i\}}/s_i}(\lambda).
\end{equation}
\end{lemma}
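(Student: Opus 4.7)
The natural plan is to prove this by induction on $n$, the size of the external neck, by peeling the elements $s_i$ off one at a time using the deletion--contraction identity of Theorem~\ref{condel}. Writing $S_i = \{s_1, s_2, \ldots, s_i\}$, with the convention $S_0 = \emptyset$, we have $M^{S_0} = M$ and the telescope we want is
\begin{equation*}
\chi_{M^{S_{i-1}}}(\lambda) \;=\; \chi_{M^{S_i}}(\lambda) + \chi_{M^{S_i}/s_i}(\lambda) \qquad \text{for } i = 1, 2, \ldots, n,
\end{equation*}
which, summed from $i = 1$ to $n$, collapses to exactly the claimed identity.

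To obtain each such equation, I would first observe that $M^{S_i} \setminus s_i = M^{S_{i-1}}$, since these matroids are identical restrictions of $\overline{M}_q$. Then, after verifying that $s_i$ is neither a loop nor a coloop of $M^{S_i}$, Theorem~\ref{condel} gives $\chi_{M^{S_i}}(\lambda) = \chi_{M^{S_{i-1}}}(\lambda) - \chi_{M^{S_i}/s_i}(\lambda)$, which is the required rearrangement.

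The work therefore reduces to the loop/coloop check, which I expect is the only substantive point. The element $s_i$ is not a loop because $\overline{M}_q = PG(r(M)-1,q)$ is simple and $s_i$ is an element of $\overline{M}_q$. For the coloop check, recall that $s_i$ lies in the external neck of $uw$, which by definition is contained in the neck $U \cap W$, where $U = \cl_{\overline{M}_q}(U')$, $W = \cl_{\overline{M}_q}(W')$ and $U', W' \subseteq E(M)$. In particular, $s_i \in \cl_{\overline{M}_q}(U') \subseteq \cl_{\overline{M}_q}(E(M))$, so $s_i$ lies in the closure of $E(M)$ within $M^{S_i}$. Since $E(M) \subseteq E(M^{S_i}) \setminus \{s_i\}$, this forces $r(M^{S_i} \setminus s_i) = r(M^{S_i})$, i.e.\ $s_i$ is not a coloop of $M^{S_i}$.

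With loop/coloop freeness established, the application of Theorem~\ref{condel} and the telescoping sum are routine. The only place where one must be careful is in the ordering: we strip off the $s_i$ in reverse order $s_n, s_{n-1}, \ldots, s_1$ (or, equivalently, build $M^S$ from $M$ by adding them in order $s_1, \ldots, s_n$), so the argument that $s_i$ is in the closure of $E(M)$ inside $M^{S_i}$ works uniformly regardless of which $s_j$'s with $j < i$ have already been added, since the closure only grows. No obstruction arises from representability beyond what is already encoded in the definition of $\overline{M}_q$, and no appeal to the tree-decomposition beyond the fact that $S$ lies in the external neck is needed.
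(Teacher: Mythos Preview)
Your proposal is correct and follows essentially the same approach as the paper's proof: repeated application of the deletion--contraction identity (Theorem~\ref{condel}) to peel off the elements $s_1,\ldots,s_n$ one at a time, yielding a telescoping sum. Your loop/coloop verification is in fact more explicit than the paper's, which simply asserts ``by construction'' that each $s_i$ is neither a loop nor a coloop of $M^{\{s_1,\ldots,s_i\}}$.
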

\begin{proof}
By construction, $s_1$ is neither a loop nor a coloop of $M^{s_1}$.
Furthermore, $s_i$ is neither a loop nor a coloop of $M^{\{s_1,s_2,\dots ,s_i\}}$ for all $i\in\{1,2,\dots ,n\}$.
By Theorem~\ref{condel}, $\chi _M(\lambda) =  \chi _{M^{s_1}/s_1}(\lambda) + \chi _{M^{s_1}}(\lambda)$.
By repeated application of Theorem~\ref{condel},
\begin{align*}
\chi _M(\lambda) &= \chi _{M^{s_1}/s_1}(\lambda) + \chi _{M^{s_1}}(\lambda)\\
&=  \chi _{M^{s_1}/s_1}(\lambda)  +\chi _{M^{\{s_1,s_2\}}/s_2}(\lambda) + \chi _{M^{\{s_1,s_2\}}}(\lambda)\\
&=  \chi _{M^{s_1}/s_1}(\lambda)  +\chi _{M^{\{s_1,s_2\}}/s_2}(\lambda)  +\chi _{M^{\{s_1,s_2,s_3\}}/s_3}(\lambda) + \chi _{M^{\{s_1,s_2,s_3\}}}(\lambda)\\
 &\mathrel{\makebox[\widthof{=}]{\vdots}} \\
&=  \chi _{M^{s_1}/s_1}(\lambda)  +\chi _{M^{\{s_1,s_2\}}/s_2}(\lambda) + \dots  +\chi _{M^S/s_n}(\lambda) + \chi _{M^S}(\lambda).
\end{align*}
Thus, the lemma holds.
\end{proof}

\section{Bounds for zeros of the characteristic polynomial}

In this section we prove the main theorem in two ways, illustrating different techniques each time. The first proof requires us to consider separately the case where $v(M)=1$.
\begin{lemma}
\label{bullseye}
Let $M$ be a loopless, $GF(q)$-representable matroid of tree-width $k$, for some prime power $q$ and some positive integer $k$, with $v(M)=1$.
Suppose that, if $N$ is a loopless, $GF(q)$-representable matroid with tree-width at most $k$ and $r(N)<r(M)$, then $\chi _N(\lambda)>0$ for all $\lambda >q^{k-1}$.
Then $\chi _{M}(\lambda)>0$ for all $\lambda >q^{k-1}$.
\end{lemma}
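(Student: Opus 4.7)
The plan is to exploit the assumption $v(M)=1$ to conclude $r(M)=k$, realize $M$ as a spanning restriction of $PG(k-1,q)$, and then expand $\chi_M$ by iterated deletion--contraction (Theorem~\ref{condel}) along the elements of $PG(k-1,q)$ that are missing from $M$. Each resulting summand is either $\chi_{PG(k-1,q)}$ itself, which is positive for $\lambda>q^{k-1}$ by Equation~\eqref{eq:PGchar}, or the characteristic polynomial of a contraction whose rank has dropped, opening the door to the induction hypothesis.

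First I would observe that $r(M)=k$. Since $v(M)=1$, a good tree-decomposition of $M$ consists of a single vertex, whose node width equals $r(M)$ by the degenerate-case convention in the definition of node width; this node width must equal $\tw(M)=k$. Next, because simplification preserves both $\chi_M$ and the hypothesis of being loopless and $GF(q)$-representable, and does not increase tree-width by Lemma~\ref{twprop}(i), I may assume $M$ is simple; then $M$ is a spanning restriction of $\overline{M}_q=PG(k-1,q)$. The case $k=1$ is immediate since then $M=PG(0,q)$ and $\chi_M(\lambda)=\lambda-1>0$ for $\lambda>1=q^{0}$.

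For $k\ge 2$, list the missing elements $S=E(\overline{M}_q)\setminus E(M)=\{s_1,\dots,s_n\}$ and set $M_0=M$, $M_i=M^{\{s_1,\dots,s_i\}}$, so that $M_n=\overline{M}_q$. Each $M_i$ is simple of rank $k$, so $s_i$ is not a loop and, since $r(M_{i-1})=k=r(M_i)$, not a coloop of $M_i$ either. Hence Theorem~\ref{condel} may be rearranged as $\chi_{M_{i-1}}(\lambda)=\chi_{M_i}(\lambda)+\chi_{M_i/s_i}(\lambda)$, and iterating produces
\begin{equation*}
\chi_M(\lambda)=\chi_{\overline{M}_q}(\lambda)+\sum_{i=1}^n \chi_{M_i/s_i}(\lambda).
\end{equation*}
By Equation~\eqref{eq:PGchar}, $\chi_{\overline{M}_q}(\lambda)>0$ for $\lambda>q^{k-1}$. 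For each $i$, $M_i/s_i$ is a minor of $PG(k-1,q)$, hence by Lemma~\ref{twprop}(i) and (ii) it is $GF(q)$-representable with tree-width at most $k$, and it has rank $k-1<k=r(M)$. If $M_i/s_i$ has a loop then $\chi_{M_i/s_i}\equiv 0$; otherwise the induction hypothesis of the lemma applies and gives $\chi_{M_i/s_i}(\lambda)>0$ for $\lambda>q^{k-1}$. In every case the term is nonnegative, so $\chi_M(\lambda)>0$ for $\lambda>q^{k-1}$.

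The only delicate points, and hence the place where I would slow down in writing up, are the two sanity checks that make the telescoping identity legal at every step: verifying that each $s_i$ is neither a loop nor a coloop of $M_i$, and noting that although contraction may introduce loops into $M_i/s_i$, the resulting zero characteristic polynomial does no harm in the nonnegative sum. Beyond these checks, the argument is a direct combination of Theorem~\ref{condel}, Equation~\eqref{eq:PGchar}, Lemma~\ref{twprop}, and the stated induction hypothesis.
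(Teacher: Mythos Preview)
Your proposal is correct and follows essentially the same route as the paper's proof: use $v(M)=1$ to get $r(M)=k$, embed the simplification of $M$ as a spanning restriction of $PG(k-1,q)$, telescope $\chi_M$ via Theorem~\ref{condel} along the missing elements $S$, handle the final $\chi_{PG(k-1,q)}$ term with Equation~\eqref{eq:PGchar}, and invoke the inductive hypothesis on each rank-$(k-1)$ contraction. Your write-up is in fact slightly more careful than the paper's in spelling out why each $s_i$ is neither a loop nor a coloop and in accounting for the (in fact vacuous, since each $M_i$ is simple) possibility that $M_i/s_i$ acquires a loop.
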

\begin{proof}We may assume that $M$ is simple.
Let $(T,\tau)$ be a good tree-decomposition of $M$.
The single vertex in $V(T)$ must have node width $k$. By Lemma~\ref{twprop}, we have $k=r(M)$.
Let $S$ be the set of elements in $E(\overline{M}_q)-E(M)$.
As in the proof of Lemma~\ref{starfish2}, by repeated application of Theorem~\ref{condel},
$\chi _M(\lambda)=  \chi _{M^{s_1}/s_1}(\lambda)  +\chi _{M^{\{s_1,s_2\}}/s_2}(\lambda) + \dots  +\chi _{M^S/s_n}(\lambda) + \chi _{M^S}(\lambda)$.
By assumption, each term of this sum is positive for all $\lambda>q^{k-1}$ with the possible exception of $\chi _{M^S}(\lambda)$.
As $M^S$ is a projective geometry with rank $r(M)=k$,
it follows that
$\chi _{M^S}(\lambda)=(\lambda-1)(\lambda-q)(\lambda-q^2)\cdots(\lambda-q^{k-1})$.
Thus $\chi _{M^S}(\lambda)>0$ for all $\lambda >q^{k-1}$.
\end{proof}

The first proof of the main theorem uses basic tools from characteristic polynomials, and exemplifies the tree-decomposition techniques established by Hlin\u{e}n\'{y} and Whittle in~\cite{HW}, and further developed in this paper, to generalize Thomassen's graph technique.

\begin{proof}[Proof of Theorem~\ref{maincourse}]
If $M$ has a loop, then its characteristic polynomial is identically zero, so we may assume that $M$ is loopless.
As $M$ and its simplification have the same characteristic polynomial and the same tree-width, we may assume that $M$ is simple.
We proceed by induction on $r(M)$.
Suppose that $r(M)=1$.
Then $M\cong U_{1,1}$ and
 $\chi _M(\lambda)=\lambda -1$.
Thus $\chi _M(\lambda)>0$ if $\lambda>1$, hence $\chi _M(\lambda)$ is certainly strictly positive for all $\lambda >q^{k-1}$.

We now assume $r(M)>1$.
Take $(T,\tau)$, a good tree-decomposition of $M$.
If $T$ has a single vertex, then by Lemma~\ref{bullseye}, the result follows.
Thus, we may assume that $T$ contains a leaf $w$ with neighbour $u$. Let $S=\{ s_1,s_2,\dots ,s_n\}$ be the elements in the external neck of $uw$.
By Lemma~\ref{starfish1}, $\tw (M)=\tw (M^S)$ and by Lemma~\ref{starfish2}
\begin{equation}\label{fishy}\chi _M(\lambda) =  \chi _{M^{S}}(\lambda)+\sum _{i=1}^n \chi _{M^{\{s_1,s_2,\dots ,s_i\}}/s_i}(\lambda).\end{equation}

Since $M$ and, consequently, $M^S$ are simple, each of the matroids appearing in the sum on the right-hand side of~\eqref{fishy} is loopless and has rank $r(M)-1$.
Lemma~\ref{twprop} implies that tree-width is not increased by contracting elements.
By induction the characteristic polynomial of $M^{\{s_1,s_2,\dots ,s_i\}}/s_i$ is strictly positive for all $\lambda >q^{k-1}$, for all $i\in\{1,2,\dots ,n\}$.

It remains to consider $\chi _{M^S}(\lambda)$.
Let $S'$ be the neck of $uw$, which is contained in $M^S$.
Clearly $M^S|S'\cong PG(r'-1,q)$ for some $r'$.
Let $E_w$ be the bag corresponding to $w$.
Let $M_1=M^S|(E_w\cup S')$ and $M_2=M^S\ba (E_w-S')$.
Then $M_1|S'=M_2|S'$.
By~\cite[Corollary~6.9.6]{Oxley}, $S'$ is a modular flat in $M_1$.
By~\cite[Proposition~11.4.15]{Oxley},
$M^S$ is the generalized parallel connection of $M_1$ and $M_2$ across $M_1|S'$.
Since $M$ has tree-width at most $k$, we know that
\[ r'=r_{M^S}(S') \leq r_{M^S}(E_w\cup S') = r_{M}(E_w) \leq k,\]
with the last part following from Corollary~\ref{octopus}.
Thus, by Theorem~\ref{bry},
\[
\chi _{M^S}(\lambda)=\frac{\chi _{M_1}(\lambda)\chi _{M_2}(\lambda)}{\chi _{PG(r'-1,q)}(\lambda)}.
\]

Using Equation~\eqref{eq:PGchar}, we see that the denominator is strictly positive for all $\lambda>q^{r'-1}$. Hence it is strictly positive for all $\lambda >q^{k-1}$.
By Corollary~\ref{hall}, since $T$ has $v(M)=v(M^S)$ vertices,  both $M_1$ and $M_2$ have rank less than $r(M^S)=r(M)$.
By our inductive hypothesis, both $\chi_{M_1}(\lambda)$ and $\chi_{M_2}(\lambda)$ are strictly positive for all $\lambda >q^{k-1}$. Thus $\chi_{M^S}(\lambda) > 0$ for all $\lambda >q^{k-1}$, as required.

 \end{proof}


It is also possible to generalize the second proof of Theorem~\ref{graphy}, outlined in the introduction, to matroids representable over a finite field by using the following result of Oxley~\cite[Lemma~2.7]{ox77}.

\begin{lemma}
\label{oxco}
Let $C^*=\{x_1,x_2,\dots ,x_m\}$ be a cocircuit of $M$.
Let $X_{i,j}=\{x_1,x_2,\dots ,x_{i-1},x_{i+1},\dots ,x_{j-1}\}$ for all $1\leq i<j\leq m$.
Then
\[
\chi _M(\lambda)=(\lambda -m)\chi _{M\backslash C^*}(\lambda) +\sum ^m _{j=2}\sum ^{j-1} _{i=1} \chi _{M\backslash X_{i,j}/x_i,x_j}(\lambda).
\]
\end{lemma}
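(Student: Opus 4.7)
The plan is to apply the deletion--contraction identity of Theorem~\ref{condel} twice, in two nested passes along the cocircuit $C^*$. The key bookkeeping fact is this: for any $x_k\in C^*$ and any $A\subseteq C^*\setminus\{x_k\}$, the element $x_k$ is a coloop of $M\setminus A$ if and only if $A=C^*\setminus\{x_k\}$. Indeed, a singleton cocircuit $\{x_k\}$ of $M\setminus A$ would be witnessed by a cocircuit of $M$ contained in $A\cup\{x_k\}\subseteq C^*$, and minimality of $C^*$ forces this cocircuit to equal $C^*$ itself. This justifies every peel step below and identifies exactly where the $(\lambda-1)$ factors enter. (If some $x_k$ is a loop of an intermediate matroid, the identity $\chi_N(\lambda)=\chi_{N\setminus e}(\lambda)-\chi_{N/e}(\lambda)$ still holds with both sides equal to zero, so loops need no separate treatment.)

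In the first pass I will peel $x_1,x_2,\dots,x_{m-1}$ off $M$ along the ``keep deleting'' branch: for each $k=1,\dots,m-1$, expand
\[
\chi_{M\setminus\{x_1,\dots,x_{k-1}\}}(\lambda)=\chi_{M\setminus\{x_1,\dots,x_k\}}(\lambda)-\chi_{M\setminus\{x_1,\dots,x_{k-1}\}/x_k}(\lambda),
\]
which is legitimate since $x_k$ is not a coloop at that stage. At the terminal step, $x_m$ has become a coloop of $M\setminus\{x_1,\dots,x_{m-1}\}$, so $\chi_{M\setminus\{x_1,\dots,x_{m-1}\}}(\lambda)=(\lambda-1)\chi_{M\setminus C^*}(\lambda)$, and assembling the contract pieces gives
\[
\chi_M(\lambda)=(\lambda-1)\chi_{M\setminus C^*}(\lambda)-\sum_{k=1}^{m-1}\chi_{M\setminus\{x_1,\dots,x_{k-1}\}/x_k}(\lambda).
\]

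In the second pass I will fix $k\in\{1,\dots,m-1\}$ and apply the same strategy inside the $k$th contract term, peeling $x_{k+1},x_{k+2},\dots,x_m$ off $M\setminus\{x_1,\dots,x_{k-1}\}/x_k$ along the ``keep deleting'' branch; the bookkeeping fact again guarantees that no intermediate element is a coloop. Once every element of $C^*\setminus\{x_k\}$ has been deleted, $x_k$ is a coloop of $M\setminus(C^*\setminus\{x_k\})$, so $M\setminus(C^*\setminus\{x_k\})/x_k=M\setminus C^*$ and the terminal term equals $\chi_{M\setminus C^*}(\lambda)$. The contract branch at step $l\in\{k+1,\dots,m\}$ is precisely $\chi_{M\setminus X_{k,l}/\{x_k,x_l\}}(\lambda)$ by the definition of $X_{k,l}$. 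Substituting the resulting identity
\[
\chi_{M\setminus\{x_1,\dots,x_{k-1}\}/x_k}(\lambda)=\chi_{M\setminus C^*}(\lambda)-\sum_{l=k+1}^{m}\chi_{M\setminus X_{k,l}/\{x_k,x_l\}}(\lambda)
\]
into the first-pass formula collects $(\lambda-1)-(m-1)=\lambda-m$ copies of $\chi_{M\setminus C^*}$ together with the double sum $\sum_{1\leq i<j\leq m}\chi_{M\setminus X_{i,j}/\{x_i,x_j\}}$, which is exactly the required identity.

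The main obstacle I anticipate is verifying the non-coloop claim uniformly across pass two, but this collapses to the single observation above that no proper subset of $C^*$ can be a cocircuit of $M$.
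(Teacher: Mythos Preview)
Your argument is correct. The two-pass deletion--contraction along $C^*$ works exactly as you describe: the coloop bookkeeping reduces to the single observation that no proper nonempty subset of a cocircuit is a cocircuit, and your parenthetical about loops handles the possibility that some $x_l$ is parallel to $x_k$ in the second pass.

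There is nothing to compare against in the paper itself: Lemma~\ref{oxco} is quoted without proof from Oxley~\cite[Lemma~2.7]{ox77}, so the paper provides no argument of its own for this statement. Your proof is a clean, self-contained derivation of the cited result.
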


{
A minor-closed family of matroids $\mathcal{M}$ has the \emph{bounded cocircuit property} if there is a constant $f(\mathcal{M}) =f$
{
such that any simple matroid $M$ in $\mathcal{M}$ has a cocircuit of size at most $f$. We now apply Lemma~\ref{oxco} to any minor-closed family of matroids with the bounded cocircuit property.
\begin{lemma}\label{le:bdedcoc}
Let $\mathcal M$ be a minor-closed family of matroids having the bounded cocircuit property with constant $f$. Then for any $M$ in $\mathcal M$, either $\chi_M(\lambda)$ is identically zero or $\chi_M(\lambda)>0$ for all $\lambda > f$.
\end{lemma}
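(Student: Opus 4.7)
The plan is to induct on $|E(M)|$. If $M$ has a loop then $\chi_M$ is identically zero and the conclusion is immediate. Otherwise, the base case $|E(M)|=0$ gives $\chi_M(\lambda)=1>0$. If $M$ is loopless but not simple, its simplification $\widetilde{M}$ is a proper minor of $M$ lying in $\mathcal{M}$ and has the same characteristic polynomial as $M$, so the inductive hypothesis applied to $\widetilde{M}$ settles that case. Hence we may assume $M$ is simple and loopless.

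By the bounded cocircuit property, $M$ has a cocircuit $C^*=\{x_1,x_2,\ldots,x_m\}$ with $m\le f$. Lemma~\ref{oxco} then gives
\[
\chi_M(\lambda)=(\lambda-m)\,\chi_{M\backslash C^*}(\lambda)+\sum_{j=2}^{m}\sum_{i=1}^{j-1}\chi_{M\backslash X_{i,j}/x_i,x_j}(\lambda).
\]
Every matroid appearing on the right-hand side is a proper minor of $M$ and so lies in $\mathcal{M}$. The inductive hypothesis thus tells us that each of these characteristic polynomials is either identically zero or strictly positive on $(f,\infty)$. In particular, every summand in the double sum is nonnegative for $\lambda>f$.

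It remains to exhibit one strictly positive contribution, and for this the first summand is the natural candidate. Since $\lambda>f\ge m$, the scalar $\lambda-m$ is strictly positive. The matroid $M\backslash C^*$ is loopless, because deletion cannot create loops, so $\chi_{M\backslash C^*}$ is not identically zero and the inductive hypothesis upgrades it to $\chi_{M\backslash C^*}(\lambda)>0$ on $(f,\infty)$. Hence $(\lambda-m)\chi_{M\backslash C^*}(\lambda)>0$ for $\lambda>f$, and adding the nonnegative remainder yields $\chi_M(\lambda)>0$ on $(f,\infty)$. The only real subtlety -- the ``main obstacle'' as such -- is recognising that the strict positivity must be carried by the deletion term: the contraction summands $\chi_{M\backslash X_{i,j}/x_i,x_j}$ can vanish identically because contraction may introduce loops, so they cannot be relied upon for the strict inequality, whereas the deletion $M\backslash C^*$ is guaranteed to remain loopless.
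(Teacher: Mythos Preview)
Your proof is correct and follows essentially the same approach as the paper's: both apply Lemma~\ref{oxco} to a small cocircuit, observe that the deletion term $(\lambda-m)\chi_{M\backslash C^*}(\lambda)$ is strictly positive on $(f,\infty)$ because $M\backslash C^*$ stays loopless, and that the contraction summands are nonnegative by induction. The only cosmetic difference is that you induct on $|E(M)|$ while the paper inducts on $r(M)$; your version is slightly more explicit about the base case and the reduction to simple matroids, but the substance is identical.
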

\begin{proof}
Let $M$ be a matroid in $\mathcal M$. We may assume that $M$ is simple and
that the result is valid if $r(M)=1$.

We now assume $r(M)>1$ and proceed using induction on $r(M)$.
Because $\mathcal M$ has the bounded cocircuit property, we know that $M$ has a cocircuit $C^*$ with size at most $f$.
Let $C^*=\{x_1,x_2,\dots ,x_{|C^*|}\}$ and let $X_{i,j}=\{x_1,x_2,\dots ,x_{i-1},x_{i+1},\dots ,x_{j-1}\}$ for $1\leq i<j\leq |C^*|$.
By Lemma~\ref{oxco}, $\chi _M(\lambda)$ is equal to the following
\begin{equation} \label{eq:cpexp} (\lambda -|C^*|)\chi _{M\backslash C^*}(\lambda) +\sum ^{|C^*|} _{j=2}\sum ^{j-1} _{i=1} \chi _{M\backslash X_{i,j}/x_i,x_j}(\lambda).
\end{equation}
Now $r(M\backslash C^*)=r(M)-1$ and $r(M\backslash X_{i,j}/x_i,x_j) = r(M)-2$, for all $1\leq i<j\leq |C^*|$.
By induction, each of the characteristic polynomials appearing in~\eqref{eq:cpexp}
is either identically zero or strictly positive for $\lambda > f$. Furthermore $M\backslash C^*$ is loopless and so $\chi _{M\backslash C^*}(\lambda)>0$ for $\lambda > f$.
As $|C^*|\leq f$, we conclude that $(\lambda -|C^*|)$, and hence $\chi _M(\lambda)$, is strictly positive for all $\lambda >f$.\end{proof}
}}

We now give the alternate proof of Theorem~\ref{maincourse}.

\begin{proof}[Second proof of Theorem~\ref{maincourse}]
{
Let $\mathcal M$ be the family of $GF(q)$-representable matroids with tree-width at most $k$.
Lemma~\ref{twprop} implies that $\mathcal M$ is a minor-closed class and
Lemma~\ref{sharktank} implies that $\mathcal M$ has the bounded cocircuit property with constant
$q^{k-1}$. The result now follows from Lemma~\ref{le:bdedcoc}.}
\end{proof}

%
%

{
\section{Generalizing to matroids with the bounded cocircuit property}

The argument in the second proof of Theorem~\ref{maincourse} may be extended to any family of matroids with the bounded cocircuit property. We show the family of matroids with tree-width at most $k$ containing no $U_{2,2+q}$ minor is one such family by using the following theorem of Kung~\cite{kung}.}

{
\begin{theorem}
\label{nolineskung}
Let $q$ be an integer at least two.
If $M$ is a simple matroid with rank $r$ having no $U_{2,2+q}$-minor, then $|E(M)|\leq \frac{q ^r-1}{q -1}$.
\end{theorem}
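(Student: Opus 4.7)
The plan is to prove Kung's bound by induction on the rank $r$ of $M$, with the essential geometric input being that the $U_{2,2+q}$-free hypothesis forces every line (rank-$2$ flat) of $M$ to contain at most $q+1$ points. The base case $r=1$ is immediate, since a simple rank-$1$ matroid consists of a single element and $(q^1-1)/(q-1)=1$.

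For the inductive step, I would fix an arbitrary element $e \in E(M)$ and pass to the contraction $M/e$. The key observation to establish first is the line-length bound: any line $L$ of $M$ with $|L| \geq q+2$ would give $M|L \cong U_{2,|L|}$, from which one can delete down to a $U_{2,2+q}$-restriction, contradicting the hypothesis (since restrictions are minors). In particular, every line of $M$ through $e$ has at most $q+1$ elements. Translating this through contraction: two distinct elements $f,g \in E(M)\setminus\{e\}$ are parallel in $M/e$ exactly when $\{e,f,g\}$ is a circuit of $M$, so parallel classes of $M/e$ correspond to lines of $M$ through $e$ with $e$ removed. Hence every parallel class of $M/e$ has size at most $q$.

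Let $N = \operatorname{si}(M/e)$. Then $N$ is simple, has rank $r-1$, and still contains no $U_{2,2+q}$-minor (the class of matroids excluding $U_{2,2+q}$ is minor-closed and invariant under simplification). By the inductive hypothesis,
\[
|E(N)| \;\le\; \frac{q^{r-1}-1}{q-1}.
\]
Since $E(M)\setminus\{e\}$ is partitioned into parallel classes of $M/e$, each of size at most $q$ and indexed by the elements of $N$, we conclude
\[
|E(M)| \;\le\; 1 + q\,|E(N)| \;\le\; 1 + \frac{q(q^{r-1}-1)}{q-1} \;=\; \frac{q^r-1}{q-1},
\]
as desired.

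The main obstacle is really just the line-length observation, which must be handled with a little care to ensure that a long line yields an actual $U_{2,2+q}$-minor rather than merely a rank-$2$ minor on many elements; once that step is secured, the counting argument and the induction are routine. As a sanity check, equality is achieved by $PG(r-1,q)$ whenever $q$ is a prime power, so the bound is tight in exactly the regime relevant to the preceding proofs.
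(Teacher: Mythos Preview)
Your proof is correct and is in fact the standard argument for Kung's bound. However, the paper does not provide its own proof of this statement: Theorem~\ref{nolineskung} is quoted from the literature, attributed to Kung~\cite{kung}, and then applied directly in the proof of Theorem~\ref{nolines}. So there is no paper proof to compare against; you have supplied a complete proof where the paper merely cites one.

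For what it is worth, the argument you give is exactly the classical one: bound every line by $q+1$ points using the excluded minor, then contract a point, observe that parallel classes in the contraction have size at most $q$, and apply induction to the simplification. Your handling of the one delicate point---that a long line in a simple matroid yields a genuine $U_{2,2+q}$ restriction, hence minor---is fine, since the restriction of a simple matroid to a rank-$2$ flat is uniform. The closing remark about tightness at $PG(r-1,q)$ is also apt and connects well with the paper's use of the theorem.
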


Let $p_q$ be the largest prime less than or equal to $q$.
When $r$ is sufficiently large, Geelen and Nelson showed that the bound on the number of elements can be obtained by replacing $q$ with $p_q$ in the preceding theorem.
In~\cite{Nel}, Nelson conjectured that this improvement holds as long as $r\geq 4$.
If Nelson's conjecture holds, then the bound given in Theorem~\ref{nolines} can be improved by replacing $q$ with $p_q$ in the case that $r(M)\geq 4$.

We now prove Theorem~\ref{nolines}, in which we replace the representability condition of Theorem~\ref{maincourse} with the condition that $M$ contain no long line minor.
This generalization was suggested by Geelen and Nelson~\cite{gnpc}.

\begin{proof}[Proof of Theorem~\ref{nolines}]
{ We claim that if $M$ is simple, has tree-width at most $k$ and has no $U_{2,2+q}$ minor, then it has a cocircuit of size at most $\frac{q ^k-1}{q -1}$. The result then follows by noting that the class of matroids with tree-width at most $k$ having no $
U_{2,2+q}$ minor is a minor-closed class and applying Lemma~\ref{le:bdedcoc}.

Once again, we may assume that $M$ is simple and that the claim is valid if $r(M)=1$.
We proceed by induction on $r(M)$. Take $(T,\tau)$, a good tree-decomposition of $M$ and vertex $v\in V(T)$ with degree at most one.
Let $E_v$ be the bag corresponding to $v$ and let $r=r(E_v)$.
Lemma~\ref{twprop}(iii) implies that $r\leq k$.
If $T$ consists of a single vertex, then $E(M)=E_v$.
Furthermore $E_v$ contains a cocircuit since $r\geq 1$.
Suppose then that $T$ contains more than one vertex.
Then $v$ is a leaf vertex. Since $(T,\tau)$ is a good tree-decomposition, $E_v$ is not contained in $\cl (E(M)-E_v)$ by Lemma~\ref{red}.
Thus $E_v$ contains a cocircuit of $M$.}

Theorem~\ref{nolineskung} implies that $|E_v|\leq \frac{q ^r-1}{q -1}\leq \frac{q ^k-1}{q -1}$.
Thus $M$ has a cocircuit $C^*$ of size at most $\frac{q ^k-1}{q -1}$.
\end{proof}

{ In Corollary~\ref{dichotomy}, we}
{ completely determine whether there is a bound on the largest real root of the characteristic polynomial of any matroid belonging to a minor-closed family having bounded tree-width.}
It would be interesting to find minor-closed classes of matroids that do not have the bounded cocircuit property and determine bounds on the real characteristic roots.}

{
\section*{Acknowledgements}
We thank the anonymous referees, Jim Geelen and Peter Nelson for several helpful comments.}

\end{document}